\newcommand{\eqref}[1]{(\ref{#1})}
\newcommand{\ds}{\displaystyle}
\newtheorem{cor}[thm]{Corollary}
\newtheorem{lem}[thm]{Lemma}
\begin{document}
\begin{frontmatter}

%\dochead{}
\title{Extinction window of mean field branching annihilating random
walk\thanksref{T3}}
\runtitle{Extinction window of mean field BARW}

\begin{aug}
% Corresponding author: Ariel Yadin - ariel.yadin@gmail.com% Updated by
%VTEXPTS2LaTeX.exe, 24.11.2014 16:13
\author[A]{\fnms{Idan}~\snm{Perl}\thanksref{T1}\ead[label=e1]{perli@post.bgu.ac.il}},
\author[B]{\fnms{Arnab}~\snm{Sen}\thanksref{T2}\ead[label=e2]{arnab@math.umn.edu}}
\and
\author[A]{\fnms{Ariel}~\snm{Yadin}\corref{}\thanksref{T1}\ead[label=e3]{yadina@math.bgu.ac.il}\corref{}}
\runauthor{I. Perl, A. Sen and A. Yadin}
\affiliation{Ben-Gurion  University of the Negev,  University of Minnesota\\ and Ben-Gurion  University of the Negev}
%\dedicated{}
\address[A]{I.~Perl\\
A.~Yadin\\
Department of Mathematics\\
Ben-Gurion University of the Negev \\
Beer Sheva 8410501\\
Israel \\
\printead{e1}\\
\phantom{E-mail: }\printead*{e3}}
\address[B]{A.~Sen \\
Department of Mathematics\\
University of Minnesota \\
Minneapolis, Minnesota  55455\\
USA\\
\printead{e2}}
\end{aug}
\thankstext{T3}{Support for this work came from
EPSRC Grant EP/GO55068/1.}
\thankstext{T1}{Supported in part  by
Grant  2010357 from the United States--Israel Binational Science
Foundation (BSF).}
\thankstext{T2}{Supported in part by NSF
Grant   DMS-14-06247.}

% HISTORY:
%
\received{\smonth{10} \syear{2013}}% Updated by VTEXPTS2LaTeX.exe,
%24.11.2014 16:13
%
\revised{\smonth{8} \syear{2014}}% Updated by VTEXPTS2LaTeX.exe,
%24.11.2014 16:13

% ABSTRACT
\begin{abstract}
We study a model of growing population that competes for resources.
At each time step, all existing particles reproduce and the offspring
randomly move to neighboring sites.
Then at any site with more than one offspring, the particles are annihilated.
This is a nonmonotone model, which makes the analysis more difficult.

We consider the \textit{extinction window} of this model in the finite
mean-field case, where there are $n$
sites but movement is allowed to any site (the complete graph).
We show that although the system survives for exponential time, the
extinction window is logarithmic.
\end{abstract}

% KEYWORDS
% Pirmas kwd is didziosios raides
\begin{keyword}[class=AMS]
%\kwd[Primary ]
\kwd{60J80}
\kwd{92D25}
%\kwd[; secondary ]{}
\end{keyword}
\begin{keyword}
\kwd{Branching annihilating random walk}
\kwd{branching process}
\kwd{population models}
\end{keyword}
\end{frontmatter}

%s1 #&#
\section{Introduction}

%s1.1 #&#
\subsection{The model}

Perhaps the most classical population model is the\break \textit{Galton--Watson
branching process}.
Originally devised to model the survival of aristocratic patrilineal
surnames, the Galton--Watson
process may be described as follows:
start with one existing particle.
At every time step, all existing particles reproduce an independent
number of offspring and die out.
The main question is then, what is the probability that the system
survives forever?
By use of generating functions it is fairly simple to analyze this
model, and
in fact it is well known that in a Galton--Watson process with
offspring distribution
$L$, the probability of extinction is given by the unique minimal
solution of the equation
$s = \mathbb{E}[ s^L ]$ in the interval $(0,1]$.
Moreover, the solution $q$ satisfies $q=1$ if and only if $\mathbb
{E}[L] \leq1$; see, for example, \cite{AN72,LyonsPeres} for a
thorough treatment.

To make matters more interesting, one might add some geometry,
by having the particles not only branch (reproduce) but also
move in some underlying graph.
This is the \textit{branching random walk} model, which is described as follows:
start with one particle at some origin vertex $o$ in graph $G$.
At each time step, all existing particles reproduce an independent number
of offspring and die out. All offspring now independently choose a
random neighbor of their
parent's vertex, and move to that new position.
Thus a specific lineage of particles performs a random walk on $G$.
A different way to view this model is as a tree-indexed random walk
(see \cite{BP94A,BP94B} for more on tree-indexed random walks) where the
domain tree is the tree of lineage formed by a Galton--Watson process.
See the pioneering
work of Biggins \cite{B77} and the survey by Shi \cite{S08}.

Both models mentioned above exhibit some sort of monotonicity, enabling
coupling arguments. For example,
put in an imprecise way, if one has more particles, the branching
random walk is more likely
to be recurrent. The additional particles only help it return to the origin.

Let us now introduce the model we work with, which we dub \textit{branching-annihilating
random walk}, or $\mathsf{BARW}$ for short.
Start with a single particle at some origin vertex $o$ of a graph $G$.
At each time step, all particles independently reproduce (or branch)
into a random number of offspring. These offspring then each choose
independently a random
neighbor of their parent's vertex and move to that neighbor. (So far,
everything is
identical to the branching random walk.)
Finally, at every vertex at which there is more than one particle,
these particles are eliminated
(this is the annihilation phase).

$\mathsf{BARW}$ is a model for population reproduction in some
geometry, with a competition for resources.
The annihilation phase can be viewed as there being only enough
resources for one particle
at every vertex of the underlying graph.

Let us stress that the difficulty in analyzing $\mathsf{BARW}$ stems mainly from
the lack of monotonicity.
Adding particles may on the one hand assist in the ultimate survival of
the system, but may
also hinder the survival, as these additional particles may compete for
resources and annihilate
others, resulting in too few particles to survive.

%\subsection{Formal Definition}

It is most convenient to work with Poisson distributed offspring, so
for simplicity we
will restrict to this distribution.

%de1 #&#
\begin{dfn}
\label{dfn:BARW}
Let $\lambda> 1$ be a real number.
Let $G$ be a graph, and let $o \in G$ be some vertex.
We define \textit{branching-annihilating random walk} on~$G$,
starting at~$o$, with parameter $\lambda$,
or $\mathsf{BARW}_{G,o}(\lambda)$, as the following Markov process on
subsets of~$G$.

Let $(L_{t,j})_{t,j=1}^{\infty}$ be i.i.d. Poisson-$\lambda$ random
variables.
Start with $B_0 =  \{o \}$.
For every $t\geq0$, given $B_t \neq\varnothing$, define $B_{t+1}$ as follows.

Suppose that $B_t =  \{x_1, \ldots, x_m  \}$.
For every $1 \leq j \leq m$, let $y_{j,1}, \ldots, y_{j,L_{t,j}}$ be
independent
vertices chosen uniformly from the set $ \{y \dvtx y \sim x_j
 \}$ (the neighbors of~$x_j$ in $G$).
Define $Z_{t+1} \dvtx  G \to\mathbb{R}$ by $Z_{t+1}(x) = \sum_{j=1}^m
\sum_{i=1}^{L_{t,j} } \mathbh{1}_{ \{y_{j,i} = x  \} }$.
This is the number of offspring that have moved to $x$.

Finally, let $B_{t+1} =  \{x \dvtx Z_{t+1}(x) = 1  \}$.
In the case that $B_t = \varnothing$, then $B_{t+1} = \varnothing$ as well.
\end{dfn}

%s1.2 #&#
\subsection{Main questions and results}

As stated above, $\mathsf{BARW}$ lacks monotonicity, and thus it is
not easy to analyze.
However, it seems reasonable to ask the following immediate questions
regarding the long-term behavior.
Some of these questions are being studied by the authors in a separate work,
for the case of $G$ being the infinite $d$-regular tree.

Suppose $G$ is an infinite transitive graph. If $\lambda$ is either
too big or too small,
one may dominate $\mathsf{BARW}$ by a sub-critical Galton--Watson
process. Thus
we are guaranteed extinction in either case.
(This is not surprising, as too little offspring do not give a good
enough chance of survival,
and too many offspring create too much annihilation, thus again ruining
the chance of survival.)

The immediate questions that arise regard a super-critical interval of survival:
\begin{itemize}
\item Do there exist $\lambda_c^- \leq\lambda_c^+$ such that
for $\lambda\in(\lambda_c^-, \lambda_c^+)$ there is positive
probability of survival forever,
and for $\lambda\notin[\lambda_c^-, \lambda_c^+]$ there is
extinction a.s.?

\item If such an interval exists, what happens at the critical values
$\lambda= \lambda_c^-$
and $\lambda= \lambda_c^+$?

\item Can $\lambda_c^-, \lambda_c^+$ be identified?
\end{itemize}

In this paper we consider $\mathsf{BARW}$ in the finite graph setting,
and specifically on the complete graph.
Of course, there is always a positive probability of extinction in one step
on a finite graph, so on a finite graph $\mathsf{BARW}$ will a.s. die
out at some finite time.
However, we may consider $\mathsf{BARW}$ on a sequence of finite
graphs with size tending to infinity,
and try to understand asymptotic properties of the process for large graphs.

In this work we consider the mean-field case, where the sequence under
consideration
is the complete graph on $n$ vertices as $n \to\infty$.

Our first result states that $\mathsf{BARW}$ on the complete graph has
an exponentially large expected lifetime.

%th2 #&#
\begin{thm}
\label{thm:extinction time}
For every $\lambda>1$ there exists $c = c(\lambda)>0$
such that the following holds for all $n \in\mathbb{N}$.
Consider $\mathsf{BARW}$ on the complete graph on $n$ vertices,
and let $X_t = |B_t|$ be the number of particles at time $t$. Let
\[
T_0 = \inf \{t \ge0\dvtx  X_t = 0 \}.
\]
Then, for each $0< x < n$,
\[
\mathbb{E}[ T_0 | X_0 = x] \geq c e^{c n}.
\]
\end{thm}

Our main result regards the ``window'' of extinction.
It is not difficult to see that for $\mathsf{BARW}$ on the complete
graph on $n$ vertices,
the number of particles will oscillate for a long time around the value
$\mathrm{eq}: = \frac{\log\lambda}{\lambda} n$. We call it the
\textit{quasi-stable} state, which is obtained by solving for the state
$x$ such that $\mathbb{E}[ X_1 | X_0 =x] =x$. Below it the chain has
an upward drift whereas there is a downward drift if the chain goes
above the quasi-stable state.
Our next result considers how long it takes the process to go extinct,
once it has
been conditioned to do so; that is, how many steps did it take the
process to reach $0$
particles, at the last excursion it made below the equilibrium point
$\frac{\log\lambda}{\lambda} n$?

%th3 #&#
\begin{thm}
\label{thm:main thm}
For every $\lambda>1$ and $0 < \varepsilon< \frac{\log\lambda
}{\lambda}$, there exists $C = C(\lambda, \varepsilon) > 0$ such
that the following holds for all $n \in\mathbb{N}$.

Consider $\mathsf{BARW}$ on the complete graph on $n$ vertices, and
let $X_t = |B_t|$ be the number of particles at time $t$. Let
\[
T_0 = \inf \{t \ge0\dvtx  X_t = 0 \} \quad \mbox{and}\quad
T^+_{\mathrm{eq}- \varepsilon n} = \inf \biggl\{t \ge0 \dvtx  X_t \ge
\frac{\log\lambda}{\lambda} n - \varepsilon n \biggr\}.
\]
Then for each $0 \leq x < \frac{\log\lambda}{\lambda} n -
\varepsilon n$,
\[
C^{-1} \log(1+x) \le\mathbb{E}\bigl[ T_0|X_0 = x,  T_0 <T^+_{\mathrm{eq}-\varepsilon n} \bigr] \leq C
\log(1+x).
\]
\end{thm}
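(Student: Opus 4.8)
The plan is to analyze $X_t = |B_t|$ as a one-dimensional chain and to show that, conditioned on $\{T_0 < T^+_{\eq-\eps n}\}$, the walk decays geometrically towards $0$, so that it reaches $0$ in order $\log(1+x)$ steps. The starting point is a quantitative understanding of the one-step drift. When $B_t$ has $x$ particles on the complete graph on $n$ vertices, each of the $\mathrm{Poisson}(\lambda x)$ total offspring lands at a uniformly random vertex, and a given vertex is occupied by exactly one offspring with probability roughly $\tfrac{\lambda x}{n} e^{-\lambda x/n}$ (a Poissonization computation). Hence $\E[X_{t+1}\mid X_t = x] \approx n \cdot \tfrac{\lambda x}{n} e^{-\lambda x/n} = \lambda x e^{-\lambda x/n} =: f(x)$. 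The function $u \mapsto \lambda u e^{-\lambda u}$ on $(0,\infty)$ has its unique fixed point (other than $0$) at $u = \tfrac{\log\lambda}{\lambda}$, is increasing then decreasing, and crucially satisfies $\lambda e^{-\lambda u} \le \lambda e^{-\log\lambda} \cdot e^{-\delta}$ for $u$ bounded away from $\tfrac{\log\lambda}{\lambda}$ from below --- concretely, for $x \le \eq - \eps n$ one has $\E[X_{t+1}\mid X_t=x] \le \rho x$ for some $\rho = \rho(\lambda,\eps) < 1$. This contractive drift on the whole region $[0, \eq-\eps n)$ is the engine of the proof; I would isolate it as a lemma, together with concentration of $X_{t+1}$ around its mean (a Chernoff bound for sums of indicators, or a bounded-differences inequality, valid while $x$ is not too small) and a lower bound $\Pr[X_{t+1}=0 \mid X_t=x] \ge e^{-c x}$ for the extinction-in-one-step probability, which handles the regime of very few particles.

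For the \textbf{upper bound} $\E[T_0 \mid X_0=x,\, T_0<T^+_{\eq-\eps n}] \le C\log(1+x)$, I would argue as follows. Unconditionally, starting from $x < \eq-\eps n$, the contractive drift plus concentration gives that $X_t$ halves (say) every bounded number of steps with probability bounded below, until it drops below a constant threshold $K = K(\lambda,\eps)$, and then it hits $0$ within a further geometric number of steps; this takes $O(\log(1+x))$ steps with probability bounded below by a constant $p_0>0$, \emph{and} on this event $X_t$ never comes near $\eq-\eps n$. So $\Pr[T_0 \le A\log(1+x),\ T_0 < T^+_{\eq-\eps n} \mid X_0=x] \ge p_0$ for a suitable constant $A$. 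To upgrade to a bound on the conditional expectation one needs a matching lower bound on the denominator $\Pr[T_0 < T^+_{\eq-\eps n}\mid X_0=x]$: but this is $\ge p_0$ by the same event. Combining, $\E[T_0 \mathbbm{1}_{T_0<T^+_{\eq-\eps n}}\mid X_0=x] \le A\log(1+x)\cdot 1 + \sum_{k\ge 1} (A(k+1)\log(1+x))\,\Pr[T_0 > Ak\log(1+x),\ T_0<T^+_{\eq-\eps n}]$, and a restart/Markov argument at times that are multiples of $A\log(1+x)$ shows these tail probabilities decay geometrically in $k$ (each fresh window of length $A\log(1+x)$ started from any state below $\eq-\eps n$ independently has probability $\ge p_0$ of finishing the job without exceeding the barrier), so the sum is $O(\log(1+x))$; dividing by the denominator $\ge p_0$ gives the claim.

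For the \textbf{lower bound} $\E[T_0 \mid X_0=x,\, T_0<T^+_{\eq-\eps n}] \ge C^{-1}\log(1+x)$, the point is simply that $X_t$ cannot fall from $x$ to $0$ too fast: from the mean recursion $\E[X_{t+1}\mid X_t] = f(X_t) \ge c' X_t$ for some $c'>0$ whenever $X_t \le \eq - \eps n$ (here $c' = \lambda e^{-\lambda(\eq-\eps n)/n}$ is a positive constant, since $f$ is bounded below by a linear function on that interval), $\log^+ X_t + \gamma t$ is a submartingale for a suitable constant $\gamma>0$ on the event $\{t < T_0 \wedge T^+_{\eq-\eps n}\}$ --- more precisely, one controls $\E[\log(1+X_{t+1})\mid X_t]$ from below using Jensen in reverse via a lower bound on $X_{t+1}$, or one works directly with $\E[X_{t+1}\mid \F_t]\ge c' X_t$ and notes that the walk must pass through $O(\log x)$ "doubling scales." The cleanest route is: on the conditioned event, $T_0 \ge (\text{number of steps for a process with multiplicative downward drift at most a constant factor to go from } x \text{ to } 1)$, and optional stopping applied to an appropriate exponential/logarithmic supermartingale yields $\E[T_0\mid X_0=x, \text{event}] \ge c'' \log(1+x)$. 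The conditioning can be absorbed because on $\{T_0<T^+_{\eq-\eps n}\}$ the trajectory stays in the region where all the above drift estimates are valid.

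\textbf{Main obstacle.} The delicate point is the interface between the "many particles" regime, where $X_t$ is well-concentrated and the deterministic map $f$ governs the dynamics, and the "few particles" regime ($X_t = O(1)$), where concentration fails and one must instead use the explicit branching-type estimates for hitting $0$. Patching these together --- in particular proving the uniform-in-$n$ lower bound $\Pr[T_0<T^+_{\eq-\eps n}\mid X_0=x]\ge p_0$ and the geometric tail decay of the conditioned hitting time, both with constants depending only on $\lambda,\eps$ --- is where the real work lies; the non-monotonicity of $\BARW$ means one cannot simply dominate by a Galton--Watson process throughout and must instead exploit that on the \emph{conditioned} event the particle count never enters the regime $x \ge \eq-\eps n$ where the drift map $f$ turns unfavorable (decreasing), so the effective dynamics are genuinely contractive.
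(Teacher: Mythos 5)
There is a fundamental error at the heart of your proposal: the drift of the unconditioned chain points in the opposite direction from what you assert. For $x < \eq - \eps n$ one has $\E[X_{t+1}\mid X_t=x] = n b(x) = \lambda x e^{-\lambda x/n} \geq e^{\lambda\eps}\, x > x$, since $\lambda e^{-\lambda x/n} \geq \lambda e^{-\lambda(\frac{\log\lambda}{\lambda}-\eps)} = e^{\lambda\eps}$; the map $f$ is \emph{expansive} below the equilibrium $\eq$ and only becomes contractive above it. Your claim that $\E[X_{t+1}\mid X_t=x]\le \rho x$ with $\rho<1$ on $[0,\eq-\eps n)$ is therefore false, and everything built on it collapses. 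In particular, $\Pr_x[T_0 < T^+_{\eq-\eps n}]$ is not bounded below by a constant $p_0$: because of the upward drift it is at most $\theta^x$ for some $\theta=\theta(\lambda,\eps)<1$ (this is exactly the paper's geometric upper bound, proved via a supermartingale $\theta^{X_t}$), so it is exponentially small in $x$. Hence your upper-bound argument — bounding the denominator by $p_0$ and running a restart argument in which ``each fresh window has probability $\ge p_0$ of finishing the job without exceeding the barrier'' — fails outright; and your closing remark that on the conditioned event the dynamics are ``genuinely contractive'' because the process avoids $x\ge \eq-\eps n$ has the two regimes of $f$ interchanged.

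The second, related gap is the treatment of the conditioning. Since $\{T_0 < T^+_{\eq-\eps n}\}$ has exponentially small probability, the conditional law is drastically different from the unconditional one: it is the Doob $h$-transform $p_\vp(x,y)=\vp(y)p(x,y)/\vp(x)$ with $\vp(x)=\Pr_x[T_0<T^+_{\eq-\eps n}]$, and under it the drift points \emph{down}, so unconditional drift/submartingale estimates cannot simply be ``absorbed''; in particular your lower-bound argument (optional stopping for $\log^+ X_t + \gamma t$ using the unconditional drift) does not transfer. What is genuinely needed — and what the paper supplies — is control of the ratios of the harmonic function: $\vp(x+1)\ge \kappa\,\vp(x)$ uniformly (via a one-step coupling), and $g(x+1)\le \beta g(x)$ with $\lambda\beta<1$ for the small-barrier function $g(x)=\Pr_x[T_0<T^+_{\eps n}]$. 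These ratio bounds translate into stochastic bounds on the tilted kernel, allowing the conditioned chain to be sandwiched between a subcritical Galton--Watson process with Bernoulli offspring (for the lower bound $\ge C^{-1}\log(1+x)$) and a subcritical Poisson Galton--Watson process (for the upper bound starting from $x<\eta n$), with a separate argument (using the \emph{upward} drift $nb(x)\ge e^{\lambda\eps}x$, i.e. the very fact your proposal negates) showing that, conditioned on extinction before the barrier, the chain spends only $O(1)$ expected time in $(\delta n, \eq-\eps n)$. Without some substitute for these conditional-law estimates, neither inequality of the theorem is established by your outline.
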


%re4 #&#
\begin{rem}
Though the above theorem holds for any $\lambda>1$, the conditioned
chain $(X_t)_{ t \ge0} | T_0 < T^+_{\mathrm{eq}- \varepsilon n}$
exhibits remarkably different behaviors in two distinct regimes of the
parameter $\lambda$: (i) $\lambda$ is close to $1$, and (ii) $\lambda$ is large; see
Figures \ref{fig:1}~and~\ref{fig:2}. Our proof is
general enough to tackle both regimes simultaneously.
\end{rem}

%f1
%f2
%f1
%f1 #&#
\begin{figure}

\includegraphics{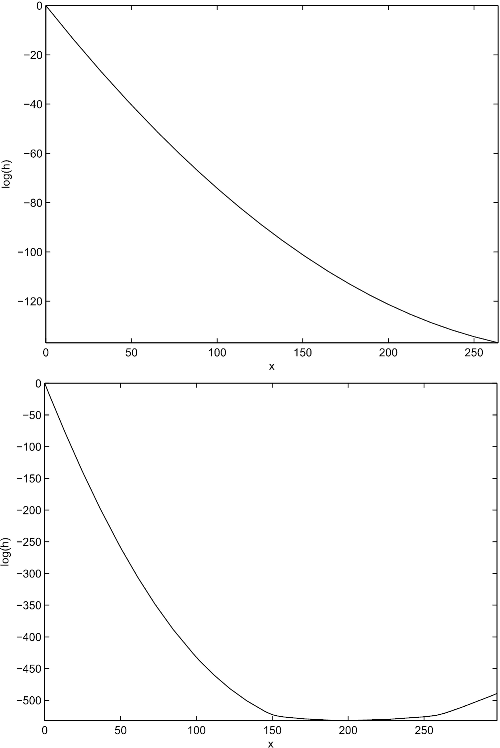}

\caption{Plot of $\log h(x)$ vs $x$ where $h(x)=\mathbb{P}_x[T_0<T_{\mathrm{eq}- \varepsilon n}^+] $ for $n=1200$,
$\varepsilon= 0.05$ and $\lambda= 1.5$ (left) and $\lambda= 6$
(right). Note that for $\lambda= 1.5$, $h$ is monotonically
decreasing, but $\log h$ is not linear, so $h$ can not be expressed as
$C \exp(-c x)$.
On the other hand, for $\lambda= 6$, the function $h$
is not even monotone---it first decreases, and then it increases near
$\mathrm{eq}- \varepsilon n$.}\label{fig:1}
\vspace*{-6pt}
\end{figure}

%f2
%f2 #&#
\begin{figure}

\includegraphics{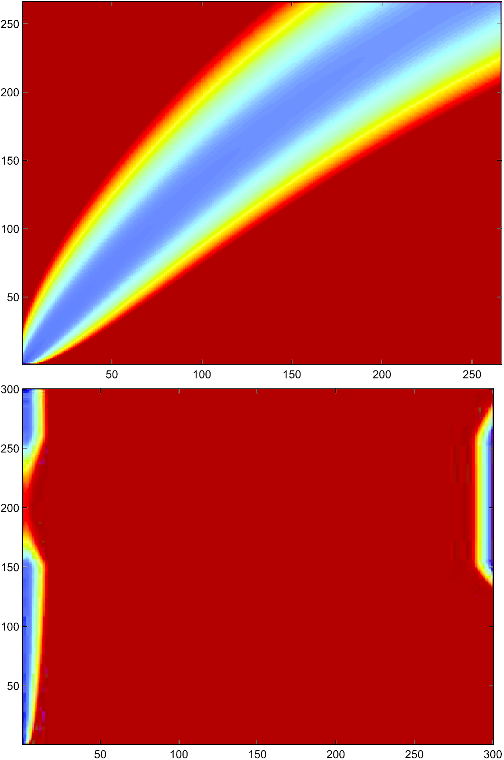}

\vspace*{-3pt}
\caption{Transition probabilities of the \emph{tilted} chain $P(\cdot,\cdot|T_0<T_{\mathrm{eq}-\varepsilon n}^+ )$ for $n=1200$,
$\varepsilon= 0.05$ and $\lambda= 1.5$ (left) and $\lambda= 6$
(right). The probabilities are represented by colors---the blue
represents high values, and the red represents small values. For
$\lambda=1.5$, from any $x$ in the tilted chain, the walker goes down
by a multiplicative factor with high probability. But for $\lambda=6$,
the transition matrix is highly concentrated. For some $x$, the tilted
chain goes up with high probability. For some $x$, it goes down with
high probability. For a few $x$'s, the transition distribution is
bimodal!}\label{fig:2}
\vspace*{-6pt}
\end{figure}

It would be interesting to find out whether, for a fixed $n$, the
expected extinction time of the conditioned chain $ \mathbb{E}[ T_0| X_0 = x, T_0 <T^+_{\mathrm{eq}- \varepsilon n}]$
is decreasing
with respect to $\lambda$.

%s1.3 #&#
\subsection{Similar models and further questions}

$\mathsf{BARW}$, or rather a continuous time versions, have been
studied before;
see, for example, \cite{BG85,BWD91,Sudbury90}.
However, most focus on survival of the process, or stationary measures.

On the other hand, recently there has been considerable interest among
the physicists to study the behavior of a finite population evolving
under some stochastic dynamics near its extinction time and
particularly to find ``most probable or optimal path to extinction''
\cite{FBSF11,SFBS11}.

To best of our knowledge this is the first work to study the
``extinction window'' for $\mathsf{BARW}$;
that is, the length of the last path to extinction.
As our results show, at least in the mean-field case, this window is
\textit{much} smaller than
the lifetime of the system, indicating that extinction is a
``catastrophic'' phenomenon, meaning that it occurs abruptly
in a very short time frame.

Our analysis makes heavy use of the fact that on the complete graph,
the geometry plays no role,
so that $\mathsf{BARW}$ can actually be seen as a Markov chain on
$ \{0,1,\ldots,n \}$, making the model simpler.
It would be very interesting to understand the expected lifetime and
extinction window in
other finite graph settings. More specifically:

%qu5 #&#
\begin{ques}
Let $(G_n, o_n)_n$ be a sequence of finite rooted graphs converging in
the local weak topology \cite{AS04} to a limiting rooted graph $(G, o)$.
Consider $\mathsf{BARW}$ on $G_n$ with Poisson-$\lambda$ offspring:
\begin{itemize}
\item Is it true that there exist critical $\lambda_c^- \leq\lambda
_c^+$ such that
if $\lambda\in(\lambda_c^-, \lambda_c^+)$,
then the expected lifetime is exponentially large in $|G_n|$,
and if $\lambda\notin[\lambda_c^-, \lambda_c^+]$,
the expected lifetime is much smaller (perhaps logarithmic)?

\item For which $\lambda$ does $\mathsf{BARW}$ on $G_n$ have a
logarithmically small extinction window?
That is, for which $\lambda$ does there exist small enough $\eta>0$
so that
conditioned on extinction before reaching above $\eta|G_n|$ particles,
the conditioned
process has logarithmically small expected lifetime?
\end{itemize}
\end{ques}

The above question is open even for a sequence of finite $d$-regular
graphs with increasing girths (whose local limit is the infinite
$d$-regular tree).

%s1.4 #&#
\subsection{Comparison with SIS model and variants}

It has been suggested that the $\mathsf{BARW}$ is similar in spirit to
the SIS infection model.
In the SIS model, all vertices in a graph are either infected or not.
The infected vertices infect their neighbors at a certain rate, and
every vertex recovers from
infection at a different rate, these rates being parameters of the model.
The discrete time version of this model may have two interpretations:
we may allow only one particle to act at every time step, which is the
discrete time backbone of the continuous time chain,
or allow all particles to act at the same time.
A similar variant may have been used in the $\mathsf{BARW}$ model.

It turns out that the SIS and $\mathsf{BARW}$ models are sensitive to
these kind of local modifications, and
we do not see a way to relate them.
In the one-particle-at-a-time versions on complete graphs, the chains
are birth and death chains, meaning that they
are Markov chains on $ \{0,1,\ldots,n \}$ with transition
probabilities restricting movement
only between states at distance $1$.
This makes the analysis simpler using the available tools for such
chains; see, for example, \cite{AldousFill}, \cite{Feller}, Chapter~XVII.5,
\cite{LPW09}, Chapter~2.4.
Let us give a brief account of this analysis.

%s1.4.1 #&#
\subsubsection{$\mathsf{BARW}$ one particle at a time}
In the continuous time $\mathsf{BARW}$ model, particles die at rate
$1$ and give off a particle to a uniform
vertex at rate $\lambda> 1$. When two particles are at a vertex, they
instantly annihilate one another.
Consider the number of living individuals and the discrete backbone of
this continuous chain as a discrete
time Markov process. Note that at each time step either one particle
dies or a new one is added, or nothing is changed.
If there are $x$ living individuals,
with probability $\frac{\lambda}{1+\lambda} \cdot (1 - \frac
{x}{n}  )$, a particle is added to an empty vertex, and the number
of individuals increases by $1$;
with probability $\frac{1}{1+\lambda}$, a living individual dies and
the number of total individuals decreases by $1$;
with remaining probability $\frac{\lambda}{1+\lambda} \cdot\frac{x}{n}$,
a~particle is added to an occupied vertex, resulting in annihilation,
so the number of total living individuals decreases by $1$.

To sum up, the transition probabilities of this chain are given by
\[
P_B(x,y) =\cases{\ds \dfrac{1}{1+\lambda} +
\dfrac{\lambda}{1+\lambda} \cdot\dfrac
{x}{n}, & \quad $x>0, y=x-1$, \vspace*{4pt}
\cr
\ds
\dfrac{\lambda}{1+\lambda} \cdot \biggl(1- \dfrac{x}{n} \biggr), & \quad $x>0, y=x+1$,
\vspace*{4pt}\cr
1, & \quad $y=x=0$.}
\]

%s1.4.2 #&#
\subsubsection{SIS one particle at a time}
In the SIS model the difference is that annihilation is replaced by
coalescence. Analogously to the above,
infected individuals recover with rate $1$ and infect a neighbor at
rate $\lambda>1$.
So considering the discrete backbone of the total number of infected vertices,
with probability $\frac{1}{1+\lambda}$, a vertex recovers and the
total number decreases by $1$;
with probability\vspace*{1pt} $\frac{\lambda}{1+\lambda} \cdot\frac{x}{n}$, an
infected vertex is infected, resulting in no change
to the total number of infected vertices; with probability $\frac
{\lambda}{1+\lambda} \cdot (1 - \frac{x}{n})$,
a\vspace*{1pt} healthy vertex is infected, and the total number increases by $1$.
The following is a summary of the transition probabilities for the SIS model:
\[
P_S(x,y) = %
\cases{\ds\frac{1}{1+\lambda}, & \quad $x>0, y=x-1$,
\vspace*{4pt}
\cr
\ds\frac{\lambda}{1+\lambda} \cdot\frac{x}{n}, &\quad  $x>0, y=x$,
\vspace*{4pt}
\cr
\ds\frac{\lambda}{1+\lambda} \cdot \biggl(1- \frac{x}{n} \biggr), &
\quad $x>0, y=x+1$,\vspace*{4pt}
\cr
1, &\quad $y=x=0$.}
\]

One now sees that there is an additional drift downward
for the $\mathsf{BARW}$ model that is not present in the SIS model.

%s1.4.3 #&#
\subsubsection{Extinction window}
In this subsection, whenever we talk
about the $\mathsf{BARW}$ and the SIS model, we refer to their
one-particle-at-a-time version.
For $\mathsf{BARW}$ and the SIS model, the quasi-stable states are
given by $\mathrm{eq}_B = \frac{\lambda- 1}{2 \lambda} n$ and
$\mathrm{eq}_S = \frac{\lambda- 1}{ \lambda} n$, respectively.
Clearly, for both these chains, the expected extinction time is at
least exponential in $n$, that is, $\mathbb{E}_x[ T_0] \ge c e^{c n}$
for any $x$, since we can find $\delta>0$ such that between the states
$0$ and $\delta n$ each of the chains can be coupled from below with a
simple random walk with bias away from zero. Thanks to the standard
results on birth and death chains regarding hitting probabilities
(\cite{AldousFill}, \cite{Feller}, Chapter XVII.5, \cite{LPW09}, Chapter~2.4),
the extinction window is also easy to calculate for these
models. Let us first talk about the SIS model. The transition
probabilities of the chain conditioned on the event $\{ T_0 <
T_{\mathrm{eq}_S - \varepsilon n}\}$ can be obtained via Doob's $h$-transform,
\[
\hat{P}_S( x, y) = P_S(x, y) \frac{ \mathbb{P}_y[T_0 < T_{\mathrm{eq}_S - \varepsilon n} ] }{\mathbb{P}_x[T_0 < T_{\mathrm{eq}_S -
\varepsilon n}]}.
\]
Let $M = \mathrm{eq}_S - \varepsilon n$. We will use the following
standard notation for the jump probabilities of a birth and death
chain: $p_x = P_S(x, x+1)$, $q_x = P_S(x, x-1)$ and $r_x = P_S(x,x)$.
Then $\mathbb{P}_x[ T_0 < T_M] = \frac{\varphi(M) - \varphi(x)}{
\varphi(M)}$ where $\varphi(x) =   \break \sum_{m=0}^{x-1} \prod_{j=1}^m
\theta_j $ for $x>1$ and $\varphi(0)=0$ and $\theta_x = q_x/p_x$.
Note that the tilted chain $\hat P_S$ is again a birth and death chain
on $0,1, 2, \ldots, M$ with jump probabilities $\hat p_x = \hat P_S(
x, x+1)$, $ \hat q_x = \hat P_S( x, x-1)$ and $\hat r_x = \hat P_S(x,x)
= r_x$.

We have
%
%e1 #&#
\begin{eqnarray}
\frac{\hat p_x}{\hat q_x} &=& \frac{p_x}{q_x} \cdot\frac{\sum_{m=x+1}^{M} \prod_{j=1}^m \theta_j}{\sum_{m=x-1}^{M} \prod_{j=1}^m
\theta_j}
\nonumber
\\[-8pt]
 \label{ratio1}
 \\[-8pt]
\nonumber
&=&  \frac{\theta_{x+1} + \theta_{x+1} \theta_{x+2}+ \cdots+ \theta
_{x+1} \theta_{x+2} \cdots\theta_M }{1+ \theta_{x} + \theta_{x}
\theta_{x+1}+ \cdots+ \theta_{x} \theta_{x+1} \cdots\theta_M}
\end{eqnarray}
for $0< x< M$, $ \frac{1}{\lambda} < \theta_x < \frac{1}{1+ \lambda
\varepsilon}$. Writing $z = \min( x+ C_1 \log n, M)$ for sufficiently
large $C_1$, we can approximate the ratio in \eqref{ratio1} by
%
%e2 #&#
\begin{equation}
\label{ratio2}
\frac{\theta_{x+1} + \theta_{x+1} \theta_{x+2}+ \cdots+ \theta
_{x+1} \theta_{x+2} \cdots\theta_z }{1+ \theta_{x} + \theta_{x}
\theta_{x+1}+ \cdots+ \theta_{x} \theta_{x+1} \cdots\theta_{z}} + O\bigl(n^{-1}\bigr).
\end{equation}
Using the fact that $|\theta_x - \theta_y| \le C_2 \frac{ |x -
y|}{n}$ for all $x, y < M$, we can write \eqref{ratio2} as
\[
\frac{\theta_{x} + \theta_{x}^2+ \cdots+ \theta_{x}^{z-x} }{1+
\theta_{x} + \theta_{x}^2+ \cdots+ \theta_{x}^{z-x +1} } + o(1) = \theta_x + o(1),
\]
where the error term $o(1)$ is uniform in $0< x < M$.

Hence, for sufficiently large $n$, the tilted chain $\hat P_S$ can be
coupled from above by a lazy simple random walk with holding
probability $\frac{1}{1+\lambda}$ and with a bias toward~$0$.
Therefore, we conclude that there exists a constant $C>0$ such that
\[
x \le \mathbb{E}_x[ T_0 | T_0 <
T_{\mathrm{eq}_S - \varepsilon n} ] \le C x\qquad  \mbox{for each } 0< x < \mathrm{eq}_S
- \varepsilon n.
\]
We can prove a similar result on the extinction window for the $\mathsf{BARW}$ one-particle-at-a-time model following exactly the same
arguments as above.

%s1.4.4 #&#
\subsubsection{SIS all particles at once}

As mentioned, in this note we consider $\mathsf{BARW}$ with all
particles reproducing at once.
The analogous SIS version could be defined as follows.
At every time step, every infected vertex infects $\operatorname{Poi}(\lambda)$
uniformly chosen neighbors (perhaps some chosen more than once).
Vertices not re-infected then recover.
This is the same as replacing annihilation in $\mathsf{BARW}$ with coalescence.
So the SIS model is the same as a branching-coalescing random walk.

When considered on the complete graph,
if there are $x$ infected vertices, every vertex receives $\operatorname{Poi}(\frac{\lambda x}{n})$ infections,
so is left infected at the next time step with probability $(1-
e^{-\lambda x/n})$, independently for all vertices.
Thus, given that there are $x$ infected vertices at time $t$, the
number of infected vertices at time $t+1$
has $\operatorname{Bin}(n, (1-e^{-\lambda x/n}) )$ distribution.

Note that the equation $n(1-e^{-\lambda x/n}) = x$ has exactly two
solutions in $[0,n]$, one which is
at $x=0$, and the other being the equilibrium of this model. Since
$n(1-e^{-\lambda x/n}) - x$ is maximized
at $x = \mathrm{eq}= n \frac{\log\lambda}{\lambda}$ and since
this maximum is positive,
we have that the equilibrium of SIS is larger than $\mathrm{eq}$, the
equilibrium of $\mathsf{BARW}$.

Analysis of the SIS model's extinction window is another possible
future direction of research.

%s1.5 #&#
\subsection{Preliminaries and notation} \label{subs:notation}

It will be much simpler to use the following equivalent form of
$\mathsf{BARW}$ on the complete
graph on $n$ vertices.
(Here is where the mean-field structure makes the analysis much simpler.)
Given that $|B_t| = x$, that is, there are $x$ particles at time $t$,
every particle branches into Poisson-$\lambda$ particles, and each of these
chooses a new vertex, independently, and uniformly among all $n$ vertices.
Thus, due to the summability of the Poisson distribution,
at the branching phase every vertex receives an independent
Poisson-$\frac{\lambda x}{n}$ number of particles.
In the annihilation phase only those vertices with exactly one particle
survive to the next step,
which happens at a given vertex with probability $b(x) : = \frac
{\lambda x}{n} e^{- \lambda x / n}$.

Thus, we have just shown that if $(X_t)_t$ is the number of existing
particles in $\mathsf{BARW}$
on the complete graph on $n$ vertices, then $(X_t)_t$ is a
Markov chain with transitions given by
\[
\mathbb{P}[ X_{t+1} = y | X_t = x ] = \mathbb{P}
\bigl[ \operatorname{Bin}\bigl(n, b(x) \bigr) = y \bigr] = \pmatrix{n \cr y}
b(x)^y \bigl(1-b(x)\bigr)^{n-y}.
\]
This observation will be central in what follows.

We use the notation $\mathbb{P}_x$ and $\mathbb{E}_x$ to denote the
probability measure
and expectation of $\mathsf{BARW}$ on the complete graph on $n$ vertices
with (Poissonian) offspring mean $\lambda$ and with $X_0=x$.

Let $\lambda>1$. Consider the Galton--Watson process with offspring
distribution
$L\sim\operatorname{Poi}(\lambda)$.
It is well known
that there exists a number $q = q(\lambda) \in(0,1)$
such that the process dies out with probability $q$,
and that $q$ is the unique fixed point of the equation $s=\mathbb
{E}[s^L] = e^{-\lambda(1-s)}$ in $(0,1)$.
Also, since $q$ is the probability of extinction, it is clear that
$q(\lambda)$ is a continuous strictly decreasing function of $\lambda$;
see, for example, \cite{LyonsPeres}.

Throughout, we make extensive use of the following inequalities, which
are easy to verify:
\begin{itemize}
\item For any\vspace*{1pt} $t\in(0,1)$ and $n\in\mathbb{N}$, $e^{-nt}\geq(1-t)^n$.
\item For any $0\leq t\leq\frac{1}{2}$, we have $\sqrt{1-2t}\geq1-t-2t^2$.
\item For any $0\leq t\leq\frac{1}{2}$, we have $1-t\geq
e^{-1+\sqrt{1-2t}}$.
\item The last two inequalities can be combined to deduce $1-t\geq
e^{-t(1+2t)}$.
\end{itemize}

$x \wedge y$ denotes the minimum of $x,y$, and $x \vee y$ denotes the
maximum of $x,y$.

We also make use of the stopping times
\[
T_x^+ = \inf \{t \ge0 \dvtx X_t \geq x \}.
\]

Another tool we will use is the following standard large deviations result
concerning binomial random variables.
For $0<\xi<1$,
\[
\mathbb{P}\bigl[ \operatorname{Bin}(n,b) < \xi nb \bigr] \leq\exp \biggl(- nb
\cdot \frac{(1-\xi)^2}{4} \biggr).
\]

%s2 #&#
\section{The extinction time for unconditional chain}

In this section we prove Theorem~\ref{thm:extinction time}.

Let $\tau^+_{\varepsilon n}$ be the \textit{return} time to one of the
sites in $[\varepsilon n,n]$,
\[
\tau^+_{\varepsilon n}:=\inf \{t\geq1\dvtx  X_t \geq\varepsilon n \}.
\]

For the proof of Theorem~\ref{thm:extinction time} we do not require
the full strength
of the following lemma, but it will also be required in the sequel.
Recall from Section~\ref{subs:notation} that given $\lambda>1$, $q
= q(\lambda) \in(0,1)$ is the Poisson dual parameter, that is, the
unique number satisfying $\lambda e^{- \lambda} = qe^{-q}$.

%le6 #&#
\begin{lem}
\label{lem:sub and super}
Let $0<\varepsilon<\frac{1}{2\lambda}$ and small enough such that
$\lambda e^{-\lambda\varepsilon}>1$. Let $\lambda_1=\lambda
e^{-\lambda\varepsilon},\lambda_2=\lambda(1+2\lambda\varepsilon
)$, and define $q_1=q(\lambda_1), q_2=q(\lambda_2)$. Then
\[
\frac{q_2^x-q_2^{\varepsilon n}}{1-q_2^{\varepsilon n}}\leq g(x) \leq \frac{q_1^x-q_1^n}{1-q_1^n},\qquad  0\leq x<\varepsilon n,
\]
where $g(x):=\mathbb{P}_x[T_0<T_{\varepsilon n}^+]$.
\end{lem}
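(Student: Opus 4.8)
The idea is to sandwich the process $(X_t)$ between two Galton–Watson-type processes on the range $[0,\eps n)$ and then read off the extinction probability from the classical Galton–Watson extinction formula. As long as $0 \le x < \eps n$, the per-site survival probability is $b(x) = \tfrac{\lambda x}{n} e^{-\lambda x/n}$, and since $x \mapsto \tfrac{\lambda x}{n}$ lies in $(0,\lambda\eps)$ with $\lambda\eps < \tfrac12$, the elementary inequalities listed in the preliminaries give $\tfrac{\lambda x}{n} e^{-\lambda\eps} \le \tfrac{\lambda x}{n} e^{-\lambda x/n}$ and $e^{-\lambda x/n} \ge e^{-\tfrac{\lambda x}{n}(1+2\lambda x/n)} \ge \ldots$ — more precisely, using $1-t \ge e^{-t(1+2t)}$ one controls $e^{-\lambda x/n}$ from below, so that $b(x) \ge \tfrac{\lambda x}{n}(1 - \tfrac{\lambda x}{n})$ type bounds hold. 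The upshot I want: for $0 \le x < \eps n$,
\[
 \Bigl(1 - (1 - b_2(x))\Bigr) \ \text{behaves like a mean-}\tfrac{\lambda_2 x}{n}\text{-offspring step from below,}
\]
and symmetrically $b(x) \le \tfrac{\lambda_1 x}{n}\cdot\tfrac{1}{x}\cdot x$-type bound from above, where $\lambda_1 = \lambda e^{-\lambda\eps}$ and $\lambda_2 = \lambda(1+2\lambda\eps)$.

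**Monotone coupling to Galton–Watson.** Conditioned on $X_t = x$, the next state is $\mathrm{Bin}(n, b(x))$, whose mean is $n b(x)$. I claim $n b(x) \le \lambda_1 x$ and $n b(x) \ge \lambda_2' x$ for a suitable comparison; the key structural point is that $\mathrm{Bin}(n,b(x))$ is stochastically dominated by (and dominates) a sum of $x$ i.i.d.\ contributions, because $b(x)$ is, up to the annihilation correction, linear in $x$. Concretely, write $b(x) = \tfrac{\lambda x}{n}\,e^{-\lambda x/n}$ and note $\mathrm{Bin}(n, b(x)) \preceq \mathrm{Poi}(n b(x)) = \mathrm{Poi}(\lambda x\, e^{-\lambda x/n}) \preceq \mathrm{Poi}(\lambda_1 x)$, a sum of $x$ i.i.d.\ $\mathrm{Poi}(\lambda_1)$ variables; this couples $(X_t)$ from above, while below $X_t$ stands still, with a Galton–Watson process with $\mathrm{Poi}(\lambda_1)$ offspring — but only as long as $X_t < \eps n$, which is exactly the regime of the hitting-time comparison. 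Thus $g(x) = \PR_x[T_0 < T^+_{\eps n}] \le \PR_x^{\mathrm{GW}(\lambda_1)}[\text{extinction before exceeding } n]$. The Galton–Watson process $(Z_t)$ with $Z_0 = x$ and offspring $\mathrm{Poi}(\lambda_1)$ is a nonnegative martingale-free chain whose extinction probability started from $x$ is $q_1^x$; the gambler's-ruin-type identity $\PR_x[\text{hit }0\text{ before }n] = \tfrac{q_1^x - q_1^n}{1 - q_1^n}$ follows because $q_1^{Z_t}$ is a bounded martingale (since $\EX[q_1^{\mathrm{Poi}(\lambda_1)}] = q_1$), so optional stopping at $T_0 \wedge T_n$ gives $q_1^x = \PR_x[T_0 < T_n]\cdot 1 + \PR_x[T_n < T_0]\cdot q_1^n$ (one must check $T_0 \wedge T_n < \infty$ a.s., which holds as $\lambda_1 > 1$). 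Rearranging yields the upper bound. For the lower bound, I similarly dominate $\mathrm{Bin}(n, b(x))$ from below by a Galton–Watson process with slightly smaller effective rate — using $e^{-\lambda x/n} \ge 1 - \lambda x/n \ge \ldots$ combined with a lower stochastic bound on $\mathrm{Bin}(n,b(x))$ in terms of a sum of $x$ i.i.d.\ Bernoulli/Poisson contributions with mean $\ge \lambda_2 \cdot$ (something); here the barrier must be placed at $\eps n$ rather than $n$ because below we can only guarantee the coupling while $X_t \le \eps n$, hence $q_2^{\eps n}$ appears in the denominator and numerator. The same martingale/optional-stopping computation with $q_2$ and barriers $0,\eps n$ gives $g(x) \ge \tfrac{q_2^x - q_2^{\eps n}}{1 - q_2^{\eps n}}$.

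**The main obstacle.** The delicate point is the lower-bound coupling: $\mathrm{Bin}(n,b(x))$ must be shown to \emph{dominate} a Galton–Watson process with offspring mean strictly above $1$, uniformly over $0 \le x < \eps n$, and the naive bound $e^{-\lambda x/n} \ge 1 - \lambda x/n$ degrades the rate to roughly $\lambda(1 - \lambda\eps)$, which need not exceed $1$. The fix is precisely the sharper inequality $1 - t \ge e^{-t(1+2t)}$ from the preliminaries, giving $e^{-\lambda x/n} \ge$ a bound that, when folded into $b(x)$, yields effective per-particle mean $\ge \lambda e^{-(\lambda x/n)(1 + 2\lambda x/n)} \ge \lambda e^{-\lambda\eps(1+2\lambda\eps)}$; one then checks this is $> 1$ under the hypothesis $\lambda e^{-\lambda\eps} > 1$ — wait, this needs care, which is why the statement instead uses $\lambda_2 = \lambda(1+2\lambda\eps)$ as an \emph{upper} comparison rate and places $q_2$ (the extinction probability of the \emph{faster}-growing, hence harder-to-kill process) in the \emph{lower} bound, which is the correct monotonicity direction. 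So the real work is: (i) verify the two stochastic dominations $\mathrm{GW}(\lambda_2)\text{-step from below} \preceq \mathrm{Bin}(n,b(x)) \preceq \mathrm{GW}(\lambda_1)\text{-step}$ on $[0,\eps n)$ using the listed inequalities; (ii) confirm $\lambda_1, \lambda_2 > 1$ so both Galton–Watson processes are supercritical and the optional-stopping identities are valid; (iii) handle the barrier at $n$ versus $\eps n$ consistently on each side. Steps (ii) and (iii) are routine given the hypotheses; step (i) is where all the arithmetic lives.
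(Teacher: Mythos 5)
Your proposed route --- sandwiching $(X_t)$ between two Galton--Watson chains and reading off gambler's-ruin probabilities --- is not actually carried out, and the central dominations you assert go in the wrong direction. First, $\mathrm{Bin}(n,b(x))\preceq\mathrm{Poi}(nb(x))$ is false: the two have equal means, so neither can stochastically dominate the other unless they coincide (the correct statement, used elsewhere in the paper, is $\mathrm{Bin}(n,p)\preceq\mathrm{Poi}(-n\log(1-p))$). Second, for $0\le x<\eps n$ we have $e^{-\lambda x/n}\ge e^{-\lambda\eps}$, hence $nb(x)=\lambda x\,e^{-\lambda x/n}\ge \lambda_1 x$, so $\mathrm{Poi}(\lambda x e^{-\lambda x/n})$ dominates $\mathrm{Poi}(\lambda_1 x)$, not the reverse as you claim. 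Third, even with a correct coupling the logic is inverted: if $X_t\le W_t$ pathwise, then on the event that $W$ hits $0$ before exceeding the barrier so does $X$, so a process dominating $X$ from above yields a \emph{lower} bound on $g(x)$; to obtain the claimed \emph{upper} bound you would need a supercritical process lying \emph{below} $X$, and a Galton--Watson chain with $\mathrm{Poi}(\lambda_1)$ offspring can never be stochastically dominated by the $\mathrm{Bin}(n,b(x))$ steps of $X$ (Poisson has unbounded support, binomial does not). Overshoot above the barrier would also destroy the exact identity $q_1^x=\Pr[T_0<T_n]+\Pr[T_n<T_0]\,q_1^n$ you invoke. So the upper-bound half of the lemma is not reachable by the plan as written, and the lower-bound half is only sketched (your ``step (i) is where all the arithmetic lives'') rather than proved.

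The paper's proof avoids coupling altogether: it works with the chain itself and shows that $(q_1^{X_t})$ is a supermartingale and $(q_2^{X_t})$ is a submartingale up to $T^+_{\eps n}$, using $(1-b(x)(1-q))^n\le e^{-nb(x)(1-q)}$ together with $e^{-\lambda x/n}\ge e^{-\lambda\eps}$ and the fixed-point identity $q_1=e^{-\lambda_1(1-q_1)}$ for the upper bound, and $1-t\ge e^{-t(1+2t)}$ with $b(x)(1-q_2)\le\lambda\eps<\tfrac12$ for the lower bound. Optional stopping at $T_0\wedge T^+_{\eps n}$ then gives both inequalities; the asymmetric boundary terms $q_1^n$ and $q_2^{\eps n}$ appear simply because $\eps n\le X_{T^+_{\eps n}}\le n$, which is exactly the overshoot control your coupling approach lacks. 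If you want to salvage your plan, prove sub/supermartingale estimates for $q^{X_t}$ directly along these lines rather than trying to build monotone couplings with Poisson-offspring branching processes.
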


\begin{pf}
Denote $b(x)= \frac{\lambda x}{n}e^{-{\lambda x}/{n}}$.
On $X_t=x$, we have that $X_{t+1} \sim\break  \operatorname{Bin}(n, b(x))$.
So
\begin{eqnarray*}
\mathbb{E}\bigl[q_1^{X_{t+1}}|X_t=x\bigr] &=&
\bigl(b(x)q_1+1-b(x)\bigr)^n=\bigl(1-b(x)
(1-q_1)\bigr)^n
\\
&\leq  & e^{-nb(x)(1-q_1)} %= e^{-n(1-q_1)\frac{\la x}{n}e^{-\frac{\la x}{n}}}
= \bigl[e^{-\lambda(1-q_1) e^{-{\lambda x}/{n}}}\bigr]^x
\leq q_1^x,
\end{eqnarray*}
where the last inequality follows by the definition of $q_1$.
This implies that $( q_1^{X_t } )_{t=0}^{T_{\varepsilon n}^+}$ is a
supermartingale.
%Since this supermartingale is bounded and $T_0\wedge T_{\ep n}^+$ is
%a.s.\ finite
We may apply the optional stopping theorem,
\[
q_1^x \geq\mathbb{E}\bigl[q_1^{X_{T_0\wedge T_{\varepsilon n}^+}}
\bigr]= \mathbb{E}\bigl[q_1^{X_{T_0}} \mathbh{1}_{ \{T_0<T_{\varepsilon n}^+
 \} }
\bigr]+ \mathbb{E}\bigl[q_1^{X_{T_{\varepsilon n}^+}} \mathbh{1}_{ \{
T_0>T_{\varepsilon n}^+  \} }
\bigr] \geq g(x)+\bigl(1-g(x)\bigr)q_1^n,
\]
and therefore
$g(x) \leq\frac{q_1^x-q_1^n}{1-q_1^n}$.

We obtain the lower bound similarly:
\[
\mathbb{E}\bigl[q_2^{X_{t+1}}|X_t=x\bigr] =
\bigl(b(x)q_2+1-b(x)\bigr)^n=\bigl(1-b(x)
(1-q_2)\bigr)^n.
\]
Now, $b(x)(1-q_2)=\frac{\lambda x}{n}e^{-{\lambda x}/{n}}(1-q_2)\leq
\lambda\varepsilon e^{-{\lambda x}/{n}}(1-q_2)\leq\lambda
\varepsilon< \frac{1}2$, so a short calculation gives
\begin{eqnarray*}
\mathbb{E}\bigl[q_2^{X_{t+1}}|X_t=x\bigr] &\geq &
e^{-nb(x)(1-q_2)(1+2b(x)(1-q_2))} %&= e^{-n \frac{\la x}{n}e^{-\frac{\la x}{n}} (1-q_2)(1+2\frac{\la
%x}{n}e^{-\frac{\la x}{n}}(1-q_2))}\\
\\
&=& \bigl[e^{- \lambda e^{-{\lambda x}/{n}} (1-q_2)(1+2 b(x) (1-q_2) ) }
\bigr]^x
\\
%&\geq\textit{(since } e^{-\frac{\la x}{n}}<1,\ 1-q_2<1,\ \frac{x}{n}<
%\ep)\\
&\geq & \bigl[e^{- \lambda(1-q_2)(1+2\lambda\varepsilon) } \bigr]^x =
q_2^x.
\end{eqnarray*}
This implies that $(q_2^{X_t} )_{t=0}^{ T_{\varepsilon n}^+}$ is a
submartingale.
As before, by the optional stopping theorem,
\begin{eqnarray*}
q_2^x &\leq & \mathbb{E}_x\bigl[q_x^{X_{T_0\wedge T_{\varepsilon n}^+}}
\bigr]= \mathbb{E}\bigl[q_2^{X_{T_0}}\mathbh{1}_{ \{T_0<T_{\varepsilon n}^+
 \} }
\bigr]+ \mathbb{E}\bigl[q_2^{X_{T_{\varepsilon n}^+}}\mathbh{1}_{ \{
T_0>T_{\varepsilon n}^+  \} }
\bigr]\\
&\leq &  g(x)+\bigl(1-g(x)\bigr)q_x^{\varepsilon n}
\end{eqnarray*}
and therefore
$\frac{q_2^x-q_2^{\varepsilon n}}{1-q_2^{\varepsilon n}}\leq g(x)$.
\end{pf}

\begin{pf*}{Proof of Theorem~\ref{thm:extinction time}}
Fix $\varepsilon= \varepsilon(\lambda)>0$ small enough so that:
\begin{itemize}
\item It meets the requirements of Lemma~\ref{lem:sub and super}.

\item It satisfies $b(\varepsilon n)\leq b(n)$, or equivalently,
$\varepsilon\leq e^{\lambda(1-\varepsilon)}$.
It follows that $b(\varepsilon n)\leq b(x)$ for all\vspace*{1.5pt} $x\geq\varepsilon n$.

\item It satisfies $\varepsilon n \sqrt{\lambda}\leq n b(\varepsilon
n)$, or equivalently, $e^{\lambda\varepsilon}\leq\sqrt{\lambda}$.
\end{itemize}

Keeping in mind that $\mathbb{P}_y[T_0<T^+_{\varepsilon n}]=0$ for any
$y\geq\varepsilon n$,
by the Markov property we have that
$ \mathbb{P}_x [ T_0 < \tau_{\varepsilon n}^+ ] \leq%\sum_{y < \eps
%n } \Pr_x [ X_1 = y ] \cdot
%\Pr_y[ T_0 < T_{\eps n}^+] \leq
\mathbb{P}_x [ X_1 < \varepsilon n ]$ for all $x$.

Next, we bound the term $\mathbb{P}_x[X_1<\varepsilon n]$ using
standard large deviations for the binomial
distribution. Note\vadjust{\goodbreak} that by our choice of $\varepsilon$, for any $x\geq
\varepsilon n$ we have that
$\mathbb{E}_x[X_1]=nb(x) \geq nb(\varepsilon n)\geq\varepsilon n
\sqrt{\lambda}$. Therefore, for any $x\geq\varepsilon n$,
%
%e3 #&#
\begin{equation}\label{eqn:large down step}
\hspace*{6pt}\mathbb{P}_x \bigl[ T_0 < \tau_{\varepsilon n}^+ \bigr]
\leq\mathbb {P}_x[X_1<\varepsilon n]  \leq
\mathbb{P}_x \bigl[X_1< \lambda^{-1/2} \cdot
\mathbb {E}_x[X_1] \bigr] %\leq\exp\sr{ -\EX_x[X_1] \cdot\frac{(1-\la^{-1/2})^2}{4} } \\
%&
%\leq\exp\big(-\ep n \sqrt{\la} \cdot\frac{(1-\sqrt{\la^{-1}})^2}{4}
%\big)
\leq\exp(-c\varepsilon n),
\end{equation}
where $c=\sqrt{\lambda} \cdot\frac{(1-\lambda^{-1/2})^2}{4}$.

Note that by Lemma~\ref{lem:sub and super} we have that for all $x>0$,
$\mathbb{P}_x [ T_{\varepsilon n}^+ < T_0 ] \geq c':= 1 - q_1
(1-q_1)^{-1} > 0$ where
$q_1 = q(\lambda e^{-\lambda\varepsilon} )$.
Thus, for $x < \varepsilon n$ we have that
\[
\mathbb{E}_x [ T_0 ] \geq c' \cdot\inf
_{y \geq\varepsilon n} \mathbb{E}_y [ T_0 ].
\]
So it remains to consider $x \geq\varepsilon n$.

By \eqref{eqn:large down step} and the strong Markov property,
on the event $X_0 \geq\varepsilon n$, the random time~$T_0$ dominates
a geometric random variable with success probability $e^{-c \varepsilon n}$.
Thus, for all $x$,
\[
\mathbb{E}_x [ T_0 ] \geq c' \cdot
e^{c \varepsilon n},
\]
which proves the theorem.
\end{pf*}

%\section{The extinction window}
%s3 #&#
\section{Bounds on hitting probabilities}
%s3.1 #&#
\subsection{Probability of extinction before going above level
\texorpdfstring{$\varepsilon n$}{varepsilon n}}
Throughout this subsection we denote $g(x):=\mathbb
{P}_x[T_0<T_{\varepsilon n}^+]$.
%\subsection{When $x$ is small}

Let $\alpha=\alpha(\lambda)\in(0,1)$ such that the following
inequalities hold:
$(1-\alpha)\lambda>1, \lambda e^{-\alpha\lambda}<1$.
This is equivalent to
$\frac{\log\lambda}{\lambda} <\alpha< 1-\lambda^{-1}$
which is possible since $\lambda> 1$.

Next, let $p(x,y)$ be the transition function of our Markov chain.
Explicitly, for any $0\leq x,y\leq n$, $p(x,y)=\mathbb{P}[\operatorname{Bin}(n,\break b(x))=y]$. Let $m(x):=\mathbb{E}[\operatorname{Bin}(n,b(x))]=nb(x)$
and $m_0(x):=(1-\alpha)m(x)$.

%le7 #&#
\begin{lem}
\label{lem:gamma}
For any $0<\varepsilon< \frac{1}{\lambda}$, $0\leq x<\varepsilon
n-1$ and $0\leq y\leq m_0(x)$, we have that
\[
p(x+1,y)\leq\gamma\cdot p(x,y),
\]
where $\gamma= \gamma_{\varepsilon,\alpha}:=e^{-\alpha\lambda
e^{-\lambda\varepsilon}(1-\lambda\varepsilon)} < 1$.
\end{lem}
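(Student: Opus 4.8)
The plan is to compare $p(x+1,y)$ and $p(x,y)$ directly using the explicit binomial formula. Write $b = b(x)$, $b' = b(x+1)$, and note that since $x < \ep n$ we have $\lambda x/n < \lambda \ep < 1$, so $b(\cdot) = \frac{\lambda u}{n} e^{-\lambda u/n}$ is increasing on the relevant range; hence $b' \ge b$. The ratio is
$$
\frac{p(x+1,y)}{p(x,y)} = \left(\frac{b'}{b}\right)^y \left(\frac{1-b'}{1-b}\right)^{n-y}.
$$
Since $b' \ge b$ the first factor is $\ge 1$ and the second is $\le 1$, so to get a bound below $1$ I need the second factor's decay to beat the first factor's growth, and this is exactly where the constraint $y \le m_0(x) = (1-\alpha) n b$ is used.

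First I would bound $\left(\frac{b'}{b}\right)^y$. Since $b' = b(x+1)$ and $b = b(x)$, a short computation gives $\frac{b'}{b} = \frac{x+1}{x} e^{-\lambda/n} \le \frac{x+1}{x}$, and more usefully I want an additive-in-$\log$ estimate: $\log \frac{b'}{b} \le \frac{1}{x}$ (using $\log(1+1/x) \le 1/x$ and discarding the $e^{-\lambda/n}\le 1$ factor). Actually the cleaner route is to bound $b' - b$ from below rather than the ratio, because the second factor is where the real gain lies; let me instead estimate $\left(\frac{1-b'}{1-b}\right)^{n-y} = \left(1 - \frac{b'-b}{1-b}\right)^{n-y} \le \exp\!\left(-(n-y)\frac{b'-b}{1-b}\right)$. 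So it suffices to show
$$
y \log\frac{b'}{b} - (n-y)\frac{b'-b}{1-b} \le \log \gamma = -\alpha \lambda e^{-\lambda\ep}(1-\lambda\ep).
$$

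The key quantitative inputs are: (i) a lower bound $b' - b \ge \frac{\lambda}{n} e^{-\lambda \ep}(1 - \lambda \ep)$ times something like $1/1$... more precisely, differentiating $u \mapsto \frac{\lambda u}{n}e^{-\lambda u/n}$ one sees the increment $b(x+1)-b(x)$ over a unit step is at least $\frac{\lambda}{n} e^{-\lambda x/n}(1 - \lambda x/n) \ge \frac{\lambda}{n} e^{-\lambda \ep}(1-\lambda \ep)$, since both $e^{-\lambda x/n} \ge e^{-\lambda\ep}$ and $1 - \lambda x/n \ge 1 - \lambda\ep$ on $x < \ep n$; (ii) $\log\frac{b'}{b} \le \frac{1}{x}$, so $y \log\frac{b'}{b} \le y/x$, and since $y \le (1-\alpha) n b = (1-\alpha)\lambda x e^{-\lambda x/n} \le (1-\alpha)\lambda x$ we get $y/x \le (1-\alpha)\lambda$; while $(n-y)\frac{b'-b}{1-b} \ge (n-y)(b'-b) \ge (n - nb)\cdot\frac{\lambda}{n}e^{-\lambda\ep}(1-\lambda\ep)$... but that gives $\approx \lambda e^{-\lambda\ep}(1-\lambda\ep)$, not obviously dominating $(1-\alpha)\lambda$. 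I should be more careful: the point is that I only need $n - y \ge \alpha n b / (b'-b) \cdot (\text{const})$-type slack, and since $y \le (1-\alpha)nb$ we have $n - y \ge n - (1-\alpha)nb \ge n(1 - b) \wedge$ (something giving an extra $\alpha n b$ of room). The honest statement is $(n-y) \ge \alpha n b + (n - nb)$ when $y \le (1-\alpha)nb$... no: $n - y \ge n - (1-\alpha)nb$. So $(n-y)(b'-b) \ge (n - (1-\alpha)nb)\cdot \frac{\lambda}{n}e^{-\lambda\ep}(1-\lambda\ep) = (1 - (1-\alpha)b)\lambda e^{-\lambda\ep}(1-\lambda\ep)$. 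Combined with $y\log(b'/b) \le y(b'-b)/b \cdot \frac{b \log(b'/b)}{b'-b}$... I think the cleanest bookkeeping is: $y\log\frac{b'}{b} \le y \cdot \frac{b'-b}{b}$ (since $\log(1+t)\le t$ with $t = (b'-b)/b$), hence $y\log\frac{b'}{b} - (n-y)(b'-b) \le (b'-b)\big(\frac{y}{b} - (n-y)\big) \le (b'-b)\big(\frac{(1-\alpha)nb}{b} - (n - nb)\big)$ — wait I need the bracket negative; $\frac{(1-\alpha)nb}{b} = (1-\alpha)n$ and $n - y \ge n - (1-\alpha)nb$, so the bracket is $\le (1-\alpha)n - (n - (1-\alpha)nb) = (1-\alpha)nb - \alpha n = n((1-\alpha)b - \alpha)$, which is negative precisely when $b < \frac{\alpha}{1-\alpha}$, i.e. $\lambda \ep e^{-\lambda\ep} < \frac{\alpha}{1-\alpha}$ — true for $\ep$ small. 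Then $(b'-b) \cdot n((1-\alpha)b - \alpha) \le \frac{\lambda}{n}e^{-\lambda\ep}(1-\lambda\ep)\cdot n \cdot (-(\alpha - (1-\alpha)b)) \le -\alpha \lambda e^{-\lambda\ep}(1-\lambda\ep) + (1-\alpha)b\lambda e^{-\lambda\ep}(1-\lambda\ep)$; the extra positive term $\to 0$ as $\ep \to 0$, so for $\ep$ small enough (and one may need to shrink the stated $\ep$-threshold, or the constant $\gamma$ absorbs it) we land at $\le \log\gamma$.

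I expect the main obstacle to be exactly this last piece of bookkeeping: reconciling the growth term $y\log(b'/b)$ against the decay term with the correct constant $\gamma$, including checking that the auxiliary smallness conditions on $\ep$ needed above (e.g. $\lambda\ep e^{-\lambda\ep} < \alpha/(1-\alpha)$) are compatible with $\ep < 1/\lambda$ and can be folded into the hypotheses — the paper may simply strengthen the range of $\ep$ or adjust $\gamma$ slightly. Everything else — monotonicity of $b$, the convexity inequalities $\log(1+t)\le t$ and $(1-t)^m \le e^{-mt}$ — is routine and available from the preliminaries. So the write-up would be: (1) establish $b(x+1) \ge b(x)$ and the lower bound on $b(x+1)-b(x)$ on $\{x < \ep n\}$; (2) write the ratio $p(x+1,y)/p(x,y)$ and bound it via $\exp\big(y\log\frac{b'}{b} - (n-y)\frac{b'-b}{1-b}\big)$; (3) use $y \le (1-\alpha)nb$ to control the bracket and conclude it is $\le \log\gamma$ for $\ep$ small.
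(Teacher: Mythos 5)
Your overall strategy (compare $p(x+1,y)/p(x,y)$ directly and use the constraint $y\le m_0(x)=(1-\alpha)nb(x)$ to make the exponent negative) is the right one and is essentially the paper's, but as written the bookkeeping does not prove the lemma as stated. The loss occurs exactly where you drop the factor $\tfrac{1}{1-b}$, replacing $(n-y)\tfrac{b'-b}{1-b}$ by $(n-y)(b'-b)$: this is what produces the residual positive term $(1-\alpha)b\,\lambda e^{-\lambda\ep}(1-\lambda\ep)$ in the exponent, so you only reach the bound $e^{-\lambda e^{-\lambda\ep}(1-\lambda\ep)\,(\alpha-(1-\alpha)\lambda\ep e^{-\lambda\ep})}$, which is strictly worse than $\gamma_{\ep,\alpha}$ and is $<1$ only under the extra condition $\lambda\ep e^{-\lambda\ep}<\alpha/(1-\alpha)$. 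The lemma claims the precise constant $\gamma_{\ep,\alpha}$ for the whole range $0<\ep<1/\lambda$, so "shrink $\ep$ or let $\gamma$ absorb it" is not available: the statement you have actually proved is a weaker variant.

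The fix is one line and removes the need for any extra smallness of $\ep$: keep the denominator. Using $\log\tfrac{b'}{b}\le\tfrac{b'-b}{b}$, the exponent is at most
$$
(b'-b)\Bigl[\frac{y}{b}-\frac{n-y}{1-b}\Bigr]\le (b'-b)\Bigl[(1-\alpha)n-\frac{n-(1-\alpha)nb}{1-b}\Bigr]=(b'-b)\cdot\frac{-\alpha n}{1-b}\le -\alpha n\,(b'-b),
$$
and combining with your (correct) lower bound $b(x+1)-b(x)\ge \tfrac{\lambda}{n}e^{-\lambda\ep}(1-\lambda\ep)$ (valid since $x+1<\ep n$ and $b'$ is decreasing there) gives exactly $\log\gamma_{\ep,\alpha}$. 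This corrected computation is, in substance, what the paper does: it first observes the ratio $p(x+1,y)/p(x,y)$ is increasing in $y$, so it suffices to treat $y=m_0(x)$, and then bounds $f(x+1)-f(x)$ for $f(t)=m_0(x)\log b(t)+(n-m_0(x))\log(1-b(t))$ via the mean value theorem; the derivative computation yields the same bracket $-\alpha n/(1-b(x))\le-\alpha n$ automatically, which is how the exact constant comes out with no auxiliary conditions on $\ep$. (Incidentally, your weaker constant would still suffice for the downstream use in the proof of Lemma \ref{lem:beta}, where only the $\ep\to0$ behavior matters, but it does not prove Lemma \ref{lem:gamma} itself.)
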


\begin{pf}
Recall that $b(x)=\frac{\lambda x}{n}e^{-{\lambda x}/{n}}$. The
function $te^{-t}$ is increasing for $0\leq t<1$, which implies that
$b(x)$ is increasing while $\frac{\lambda x}{n}<1$,
and thus increasing as long as $x < \varepsilon n$.
It now follows that $\frac{p(x+1,y)}{p(x,y)}$ is increasing in $y$,
\[
\frac{p(x+1,y)}{p(x,y)} %&=\frac{\binom{n}{y}b(x+1)^y(1-b(x+1)^{n-y})}{
%\binom{n}{y}b(x)^y(1-b(x)^{n-y})}\\
= \biggl( \frac{b(x+1)}{b(x)}
\biggr)^y \biggl(\frac{1-b(x+1)}{1-b(x)} \biggr)^{n-y},
\]
and since $b(x+1)>b(x)$, this expression is indeed increasing in $y$.
It follows that
\[
\max_{0\leq y\leq m_0(x)}\frac{p(x+1,y)}{p(x,y)} =\frac{p(x+1,m_0(x))}{p(x,m_0(x))}.
\]
So we want to bound from above the expression
$\frac{p(x+1,m_0(x))}{p(x,m_0(x))}$ with a bound that is independent
of $x$.

It can be simply checked that for
\[
f(t) = f_x(t) :=m_0(x)\log b(t) +\bigl(n-m_0(x)
\bigr)\log\bigl(1-b(t)\bigr),
\]
we have
\[
\log\frac{ p(x+1, m_0(x) ) }{ p(x,m_0(x) ) } = f(x+1) - f(x).
\]
So we want to bound $f(t+1)-f(t)$.
By the mean value theorem, it will be sufficient to bound $f'(t)$.

Recall that $x+1<\varepsilon n$, and $\varepsilon< \lambda^{-1}$, so
$b(\cdot)$ is monotone increasing
for $t \leq x+1$.
Upon differentiation, we get for all $t \in[x,x+1]$,
\begin{eqnarray*}
f'(t) &= & b'(t) \biggl(\frac{m_0(x)}{b(t)}-
\frac{n-m_0(x)}{1-b(t)} \biggr) \leq b'(t) \biggl(\frac{m_0(x)}{b(x) } -
\frac{n-m_0(x) }{ 1- b(x) } \biggr)
\\
& =&  \lambda \biggl(1- \frac{\lambda t}{n} \biggr) e^{- {\lambda t}/{n} } \cdot (-
\alpha) \cdot \biggl(1 + \frac{ m(x)}{n - m(x) } \biggr) \leq- \alpha\lambda(1 -
\lambda\varepsilon) e^{- \lambda
\varepsilon}.
\end{eqnarray*}
Thus
\begin{eqnarray*}
\max_{0\leq y\leq m_0(x)}\frac{p(x+1,y)}{p(x,y)} &=& \frac{p(x+1,m_0(x))}{p(x,m_0(x))}
= e^{f(x+1)-f(x)}
\\
&\leq &  e^{-\alpha\lambda e^{-\lambda\varepsilon}(1-\lambda\varepsilon
)} = \gamma_{\varepsilon, \alpha}.
\end{eqnarray*}
\upqed\end{pf}

%le8 #&#
\begin{lem}
\label{lem:beta}
There exist constants $\eta=\eta(\lambda)>0$ and
$0<\beta=\beta(\lambda)<\frac{1}{\lambda}$ such that for any
$0 < \varepsilon\leq\eta$ there exists $n_0=n_0(\varepsilon)$ such
that for all $n>n_0$,
we have that
\[
g(x+1)\leq\beta g(x) \qquad \forall x\geq0.
\]
\end{lem}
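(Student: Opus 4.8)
The plan is to bootstrap Lemma~\ref{lem:gamma} from a pointwise estimate on transition probabilities into a multiplicative estimate on the harmonic-type function $g(x) = \Pr_x[T_0 < T_{\ep n}^+]$. The key observation is that $g$ satisfies $g(x) = \sum_{y=0}^{\ep n - 1} p(x,y) g(y)$ (with $g(0) = 1$ and $g(y) = 0$ for $y \ge \ep n$), so I want to compare $g(x+1) = \sum_y p(x+1,y) g(y)$ with $g(x) = \sum_y p(x,y) g(y)$. Lemma~\ref{lem:gamma} already gives $p(x+1,y) \le \gamma\, p(x,y)$ for all $y$ in the \emph{lower} range $0 \le y \le m_0(x) = (1-\alpha) n b(x)$, where $\gamma < 1$. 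So the contribution to $g(x+1)$ from $y \le m_0(x)$ is at most $\gamma$ times the corresponding contribution to $g(x)$. The danger is entirely in the upper range $y > m_0(x)$: there $p(x+1,y)$ could exceed $p(x,y)$, and I need to show this tail is negligible.

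First I would handle the tail $\sum_{y > m_0(x)} p(x+1, y) g(y)$. Since $g \le 1$ everywhere, this is at most $\Pr_{x+1}[X_1 > m_0(x)] = \Pr[\mathrm{Bin}(n, b(x+1)) > (1-\alpha) n b(x)]$. Because $b$ is increasing on $[0,\ep n)$ and $b(x+1)$ is close to $b(x)$ (they differ by $O(1/n)$), the mean $n b(x+1)$ is essentially $n b(x)$, so this is a large-deviation event: a binomial exceeding $(1-\alpha)$ times (almost) its own mean is \emph{not} a deviation — so I have to be more careful and instead bound it by $\Pr[\mathrm{Bin}(n,b(x+1)) > (1-\alpha) n b(x+1) \cdot \tfrac{b(x)}{b(x+1)}]$... actually the cleaner route: the event $y > m_0(x)$ is not rare, so this crude bound fails. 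Instead I would split differently — compare against $g(x)$'s own upper tail. The right move: write $g(x) \ge \sum_{y \le m_0(x)} p(x,y) g(y)$ and also extract a \emph{lower} bound on $g(x)$ of the form $g(x) \ge c > 0$ uniformly (which follows from Lemma~\ref{lem:sub and super}, since $g(x) \ge \tfrac{q_2^x - q_2^{\ep n}}{1 - q_2^{\ep n}} \ge \tfrac{q_2^{\ep n - 1} - q_2^{\ep n}}{1 - q_2^{\ep n}}$ — but this is exponentially small in $n$, not a constant). The genuinely correct approach: bound the bad tail $\Pr_{x+1}[X_1 > m_0(x)]$ and compare it to $g(x)$ directly. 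We have $g(x) \ge \Pr_x[X_1 = 0] \cdot 1$... no.

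Let me restate the real plan. Set $G_{\le}(x) := \sum_{y \le m_0(x)} p(x,y) g(y)$ and $G_{>}(x) := \sum_{m_0(x) < y < \ep n} p(x,y) g(y)$, so $g(x) = G_\le(x) + G_>(x)$. By Lemma~\ref{lem:gamma}, the sum $\sum_{y \le m_0(x)} p(x+1,y) g(y) \le \gamma\, G_\le(x) \le \gamma\, g(x)$. For the remaining piece, I claim $\sum_{y > m_0(x)} p(x+1,y) g(y) \le \delta\, g(x)$ for some small $\delta = \delta(\la,\alpha,\ep)$, provided $\ep$ is small enough and $n$ large. To see this: for $y \in (m_0(x), \ep n)$, iterating $g(y+1) \le \beta g(y)$ — wait, that is circular. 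Instead, crucially, \emph{all} such $y$ satisfy $y > m_0(x) = (1-\alpha) n b(x)$, and since $b(x) \le b(\ep n) \le \la \ep$, actually $m_0(x) \le (1-\alpha)\la \ep n$, which for small $\ep$ is much less than $\ep n$ — so the range $(m_0(x), \ep n)$ is a genuine slab near the top. The point is that $g$ restricted to large $x$ (close to $\ep n$) is tiny: by Lemma~\ref{lem:sub and super}, $g(y) \le \tfrac{q_1^y - q_1^n}{1 - q_1^n} \le q_1^y \le q_1^{m_0(x)}$, and comparing with $g(x) \ge q_2^x / (\text{const})$, one gets $g(y)/g(x) \le C q_1^{m_0(x)} q_2^{-x}$. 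If $\ep$ is chosen so that $m_0(x) = (1-\alpha) n b(x)$ grows fast enough relative to $x$ — but $x$ can be as small as $0$... for small $x$, $m_0(x)$ is also small, so this fails for small $x$. Therefore I would treat \emph{small} $x$ (say $x \le K$ for a fixed constant $K = K(\la,\alpha)$) by a separate direct argument — for bounded $x$, $p(x+1,y)$ and $p(x,y)$ are comparable up to a constant in the relevant range and one checks $g(x+1) \le \beta g(x)$ by a finite computation choosing $\beta$ accordingly — and handle \emph{large} $x$ (with $x \le \ep n$) by the tail estimate above, where now $m_0(x) \ge (1-\alpha) n b(K) =: c_0 n$ grows linearly in $n$ while the slab contributes $q_1^{c_0 n} \to 0$.

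The main obstacle, and the part requiring the most care, is precisely the upper tail $\sum_{y > m_0(x)} p(x+1,y) g(y)$: showing it is at most $(\beta - \gamma) g(x)$. One must leverage that $g$ decays on the upper slab (via Lemma~\ref{lem:sub and super}) against the fact that $g(x)$ is not too small, and the case split on $x$ bounded vs.\ $x$ of order $n$ is essential because $m_0(x) \to 0$ as $x \to 0$ kills any uniform decay estimate near the bottom. Assembling the two ranges, choosing $\beta \in (\gamma, \tfrac{1}{\la})$ — note $\gamma = e^{-\alpha \la e^{-\la\ep}(1 - \la\ep)}$ can be made smaller than $\tfrac{1}{\la}$ by taking $\ep$ small, since as $\ep \to 0$ it tends to $e^{-\alpha\la} < \tfrac{1}{\la}$ would need $\alpha \la > \log \la$, which holds by the choice $\alpha > \tfrac{\log\la}{\la}$ — and finally choosing $\eta$ and $n_0(\ep)$ to absorb all error terms, gives the claim.
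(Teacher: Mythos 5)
Your treatment of the range $x\geq K$ is essentially the paper's argument: decompose $g(x+1)=\sum_y p(x+1,y)g(y)$ at the threshold $m_0(x)$, control the lower block by Lemma~\ref{lem:gamma} (giving the factor $\gamma$), and control the upper slab by the two-sided bounds of Lemma~\ref{lem:sub and super}, comparing $q_1^{m_0(x)}$ against $g(x)\gtrsim q_2^x$ and using $m_0(x)\geq(1-\alpha)\lambda e^{-\lambda\varepsilon}x$ with $(1-\alpha)\lambda>1$ so that $\bigl(q_1^{(1-\alpha)\lambda e^{-\lambda\varepsilon}}/q_2\bigr)^x$ is geometrically small in $x$; this is exactly how the paper absorbs the slab for $x\geq K$. (A secondary slip: in your final summary you write $m_0(x)\geq(1-\alpha)nb(K)=:c_0 n$ ``linear in $n$''; since $b(K)\approx\lambda K/n$, this quantity is of order $K$, a constant, and in any case the slab must be small \emph{relative to} $g(x)$, which itself can be exponentially small in $x$ --- the per-unit-$x$ exponent comparison you stated earlier is the correct mechanism, not smallness in $n$.)

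The genuine gap is the small-$x$ regime $0\leq x<K$. You propose to settle it by ``a finite computation'' based on $p(x+1,\cdot)$ and $p(x,\cdot)$ being comparable up to a constant, but comparability of the kernels only yields $g(x+1)\leq C\,g(x)$ for some constant $C$ that has no reason to be below $1/\lambda$ (or even below $1$), whereas the lemma requires $\beta<1/\lambda$ for \emph{all} $x\geq0$, including $x=0,1,\dots,K$; this strict bound is precisely what is needed later for the coupling with a subcritical Poisson Galton--Watson process. The paper closes this case by combining the upper bound $g(x+1)\leq\frac{q_1^{x+1}-q_1^n}{1-q_1^n}$ with the lower bound $g(x)\geq\frac{q_2^x-q_2^{\varepsilon n}}{1-q_2^{\varepsilon n}}$ from Lemma~\ref{lem:sub and super}, so that $g(x+1)/g(x)\leq q_1\,(q_1/q_2)^x\,(1+o(1))$, and then invokes the fact that $\lambda q(\lambda)<1$ for a Poisson-$\lambda$ branching process together with $q_1,q_2\to q(\lambda)$ as $\varepsilon\to0$ and $n\to\infty$. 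Without some input of this kind tying $g(x+1)/g(x)$ to the extinction probability $q(\lambda)$, your bounded-$x$ case does not produce a ratio below $1/\lambda$, so as written the proof is incomplete there.
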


\begin{pf}
Recall that $g(x)=\mathbb{P}_x[T_0<T_{\varepsilon n}^+]$. It follows
immediately that $g(x)=0$ for $x\geq\varepsilon n$. Therefore, we only
consider $0\leq x<\varepsilon n$.

We have by the Markov property,
for $x+1 < \varepsilon n$,
%
%e4 #&#
\begin{eqnarray}
g(x+1) &=&  \sum_y p(x+1,y)g(y)
\nonumber
\\[-8pt]
\label{eqn:decompose h}
\\[-8pt]
\nonumber
& \leq &  \sum_{y \leq m_0(x)} p(x+1,y)g(y)+\sum
_{m_0(x) < y < \varepsilon n} p(x+1,y) g(y).
\end{eqnarray}

We bound the first term in \eqref{eqn:decompose h} using Lemma~\ref{lem:gamma}:
\[
\sum_{y \leq m_0(x)} p(x+1,y)g(y)  \leq\gamma_{\varepsilon,\alpha
}
\cdot \sum_{y \leq m_0(x)} p(x,y) g(y) \leq
\gamma_{\varepsilon,\alpha} \cdot g(x).
\]

For the second term, we use the upper bound of Lemma~\ref{lem:sub and super}
to obtain
\[
\sum_{m_0(x) < y < \varepsilon n }  p(x+1,y) g(y) \leq \sum
_{m_0(x) < y < \varepsilon n } p(x+1,y) \cdot\frac{q_1^y }{ 1 - q_1^n } \leq\frac{q_1^{m_0(x)}}{ 1
- q_1^n}.
\]
Also by Lemma~\ref{lem:sub and super},
\begin{eqnarray*}
\frac{q_1^{m_0(x)}}{ 1 - q_1^n } \cdot\frac{1}{g(x)} & \leq & \frac{1 - q_2^{\varepsilon n} }{(1 - q_1^n) (1 - q_2^{\varepsilon n -
x }) } \cdot
\frac{ q_1^{m_0(x)} }{ q_2^x }
\\
& \leq & \frac{1}{(1-q_1) (1-q_2) } \cdot \bigl(q_1^{(1-\alpha) \lambda e^{-\lambda\varepsilon} } /
q_2 \bigr)^x.
\end{eqnarray*}

Note that as $\varepsilon\to0$ we have that $q_1 \to q(\lambda
),q_2\to q(\lambda)$ and $e^{-\lambda\varepsilon}\to1$. Combined
with the assumption that $(1-\alpha)\lambda>1$, we can deduce that
there exists $\eta'>0$ such that
$q_1^{(1-\alpha)\lambda e^{-\lambda\varepsilon}} / q_2$ is bounded
away from $1$ uniformly in $0<\varepsilon\leq\eta'$.
Moreover, since $\gamma_{\varepsilon,\alpha} = e^{- \alpha\lambda
e^{- \lambda\varepsilon} (1-\lambda\varepsilon)}$,
and since we assume that $\lambda e^{- \alpha\lambda} <1$,
we may take $\eta'$ small enough so that for all $0 < \varepsilon\leq
\eta'$ we have
$\lambda\gamma_{\varepsilon,\alpha} < 1$.
Consequently,
we can find $K$ large enough (that depends only on $\eta'$)
such that %for all $0<\ep\leq\eta'$,
\[
\beta' : = \sup_{\varepsilon\leq\eta'} \biggl(
\gamma_{\varepsilon
,\alpha} + \frac{1}{(1-q_1) (1-q_2) } \cdot \biggl(\frac
{q_1^{(1-\alpha)\lambda e^{-\lambda\varepsilon}}}{q_2}
\biggr)^K \biggr) < \frac{1}\lambda.
\]

Plugging all this into \eqref{eqn:decompose h},
we conclude that there exist $\eta'$ and $K\geq1$ and $\beta' <
\lambda^{-1}$
such that for all $0<\varepsilon\leq\eta'$ and for every $K\leq x <
\varepsilon n-1$,
we have $h(x+1)\leq\beta' h(x)$. This proves the lemma for $x \geq K$.

As for $0\leq x<K$, by Lemma~\ref{lem:sub and super} we have
%
%\begin{eqnarray*}
\[
\frac{g(x+1)}{g(x)} \leq  \frac{q_1^{x+1}-q_1^n}{1-q_1^n}\cdot\frac{1-q_2^{\varepsilon n}}{
q_2^x-q_2^{\varepsilon n}}
%\leq\frac{q_1^{x+1}}{1-q_1^n}\cdot\frac{1}
%{q_2^x-q_2^{\ep n}}\\
 \leq    q_1 \cdot \biggl( \frac{q_1}{q_2}
\biggr)^x \cdot\frac{1}{
(1-q_2^{\varepsilon n-K})(1-q_1^n)}.
\]
%\end{eqnarray*}
%
Recall that $q_1>q_2$ so
$(q_1/q_2)^x \leq(q_1/q_2)^K \to1$ as $\varepsilon\to0$.
Also,
${1}/((1- q_2^{\varepsilon n-K})(1-q_1^n)) \to1$ as $n\to\infty$
and
$\lambda q_1\to\lambda q(\lambda)<1$ as $\varepsilon\to0$.

Therefore,
we may choose $\eta''$ such that for all $0 < \varepsilon\leq\eta''$,
$\lambda q_1 \cdot( q_1 / q_2)^K < \frac{\lambda q(\lambda) + 1}{2}$.
Thus there exists $n_0 = n_0(\varepsilon)$ such that if $n \geq n_0$,
we have
\[
\lambda q_1 \cdot \biggl(\frac{q_1}{q_2} \biggr)^K
\cdot\frac{1}{
(1-q_2^{\varepsilon n-K})(1-q_1^n)}<1.
\]
So we can take
\[
\beta''= \sup_{\varepsilon\leq\eta''}
q_1 \cdot \biggl(\frac{q_1}{q_2} \biggr)^{K} \cdot
\frac{1}{(1-q_2^{\varepsilon n_0-K})(1-q_1^{n_0})}
\]
to obtain that $\lambda\beta'' < 1$, and
for all $0<\varepsilon\leq\eta''$, sufficiently large $n$ and $0\leq  x <K$,
we have $g(x+1)\leq\beta'' g(x)$.
Wrap up by setting $\eta=\min\{\eta',\eta''\}$ and $\beta=\max\{
\beta',\beta''\}$.
\end{pf}

%s3.2 #&#
\subsection{Probability of extinction before going above level \texorpdfstring{$u\gg\varepsilon n$}{u>>varepsilon n}}

%le9 #&#
\begin{lem}[(Uniform lower bound)]
\label{lem:uniform lower bound}
Fix $\lambda>1$. There exists $\kappa=\kappa(\lambda) > 0$ such
that for all $0 < u < n$ and $0 < x+1 < u$,
\[
\mathbb{P}_{x+1} \bigl[ T_0 < T_u^+ \bigr]
\geq\kappa \mathbb{P}_x \bigl[ T_0 < T_u^+
\bigr].
\]
\end{lem}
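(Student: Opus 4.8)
The plan is to derive the lemma from a pointwise comparison of the one-step transition kernels, $p(x+1,\cdot)\ge e^{-\lambda}p(x,\cdot)$, and then feed this into a first-step decomposition of $\Pr_x[T_0<T_u^+]$. Write $h_u(z):=\Pr_z[T_0<T_u^+]$, with the conventions $h_u(0)=1$ and $h_u(z)=0$ for $z\ge u$; recall $p(z,\cdot)$ is the law of $\mathrm{Bin}(n,b(z))$ with $b(z)=\tfrac{\lambda z}{n}e^{-\lambda z/n}$.

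\emph{Step 1 (kernel bound).} I would show that for every $x\ge 0$ with $x+1\le n$ and every $0\le y\le n$,
$$p(x+1,y)\ \ge\ e^{-\lambda}\,p(x,y).$$
Run one step of the chain from $x+1$ particles and single out one of them, $P$. Let $\Ee$ be the event that $P$ produces no offspring in the branching phase. Since the offspring counts are $\mathrm{Poi}(\lambda)$ and independent across particles, $\Pr[\Ee]=e^{-\lambda}$, and $\Ee$ is independent of everything generated by the remaining $x$ particles. On $\Ee$ the particle $P$ places nothing on any site, so the configuration after one step, hence $X_1$, is produced by the other $x$ particles alone, which is exactly one step of the chain started from $x$. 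Therefore $\Pr_{x+1}[X_1=y\mid\Ee]=p(x,y)$, and
$$p(x+1,y)\ \ge\ \Pr[\Ee]\cdot\Pr_{x+1}[X_1=y\mid\Ee]\ =\ e^{-\lambda}\,p(x,y).$$
Equivalently, in the Poissonised picture one splits the site-occupancies $\mathrm{Poi}(\tfrac{\lambda(x+1)}{n})$ as independent sums $\mathrm{Poi}(\tfrac{\lambda x}{n})+\mathrm{Poi}(\tfrac{\lambda}{n})$ site by site and conditions on all of the second summands vanishing, an event of probability $e^{-\lambda}$. This needs no case analysis and stays valid at $x=0$, where it reads $p(1,0)\ge e^{-\lambda}$.

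\emph{Step 2 (first-step decomposition).} Since $0<x+1<u$, from $X_0=x+1$ we have $T_0\wedge T_u^+\ge1$, so by the Markov property and Step~1,
$$h_u(x+1)\ =\ \sum_{y=0}^{n}p(x+1,y)\,h_u(y)\ \ge\ e^{-\lambda}\sum_{y=0}^{n}p(x,y)\,h_u(y).$$
If $x\ge1$, then $0<x<u$ and the same Markov decomposition identifies the last sum with $h_u(x)$, giving $h_u(x+1)\ge e^{-\lambda}h_u(x)$. If $x=0$, the last sum equals $h_u(0)=1$, so $h_u(1)\ge e^{-\lambda}=e^{-\lambda}h_u(0)$. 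Either way $\kappa:=e^{-\lambda}$ works.

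\emph{Main obstacle.} The argument is short and I do not expect a genuine obstacle. The one point that needs care is the assertion in Step~1 that conditioning on $\Ee$ leaves unchanged the joint law of the branching and moving of the other $x$ particles — so that the conditional law of $X_1$ really is $p(x,\cdot)$; this is immediate from the independence of distinct particles' reproduction but should be spelled out. One should also keep straight the degenerate case $x=0$ and the conventional values of $h_u$ at $0$ and at $z\ge u$, as above.
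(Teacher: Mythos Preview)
Your proof is correct and takes a genuinely different route from the paper. The paper constructs an explicit coupling of two copies of the chain, one started from $x$ and one from $x+1$: using $|b(x+1)-b(x)|\le \lambda/n$ (via the mean value theorem), it couples the two first steps so that $X_1=Y_1$ with probability at least $\kappa=\inf_n\Pr[\mathrm{Bin}(n,\tfrac{e\lambda}{(e-1)n})=0]$, and then lets the chains evolve together thereafter; this requires a case split according to the sign of $b(x+1)-b(x)$. Your approach bypasses the coupling entirely by establishing the pointwise kernel inequality $p(x+1,\cdot)\ge e^{-\lambda}\,p(x,\cdot)$, which follows immediately from conditioning on one distinguished particle producing no offspring, and then feeding this into the one-step harmonic identity for $h_u$. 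The result is shorter, avoids any case analysis, works uniformly for all $n$, and even yields the larger explicit constant $\kappa=e^{-\lambda}$ (note $e^{-\lambda}\ge e^{-e\lambda/(e-1)}$). The paper's coupling is a more transferable template when a pointwise kernel comparison is unavailable, but in this mean-field setting your direct argument is the cleaner one.
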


\begin{pf}
%Fix $0\leq x<\eq-\ep n$.
Let $X= (X_k)_{k\geq0}$, $Y=(Y_k)_{k\geq0}$ be two Markov chains
starting from $X_0=x,Y_0=x+1$, respectively, and with Markov transition
kernel $p(\cdot,\cdot)$.
Consider the following coupling:
\begin{itemize}
\item if $b(x)\leq b(x+1)$, then let $X_1\sim\operatorname{Bin}(n,b(x))$,
and given $X_1$,
\[
Y_1 = X_1+\operatorname{Bin} \biggl(n-X_1,
\frac{b(x+1)-b(x)}{1-b(x)} \biggr);
\]
\item if $b(x)> b(x+1)$, then let $Y_1\sim\operatorname{Bin}(n,b(x+1))$,
and given $Y_1$,
\[
X_1 = Y_1+\operatorname{Bin} \biggl(n-Y_1,
\frac{b(x)-b(x+1)}{1-b(x+1)} \biggr).
\]
\end{itemize}
Next, given $X_k,Y_k$ for $k\geq1$, if $X_k = Y_k$, then couple
$X_{k+1} = Y_{k+1}$,
and otherwise let $X_{k+1}, Y_{k+1}$ evolve independently.
Note that $X_1=Y_1$ implies $X_k=Y_k$ for all $k\in\mathbb{N}$.

By the mean value theorem,
\[
\bigl|b(x+1)-b(x)\bigr| \leq \sup_{y\in[x,x+1]}\bigl|b'(y)\bigr| =\sup
_{y\in[x,x+1]} \biggl| \frac{\lambda}{n}e^{-{\lambda y}/{n}}\biggl(1-
\frac{\lambda y}{n}\biggr) \biggr| \leq\frac{\lambda}{n},
\]
and since $b(z) \leq e^{-1}$, we get that $\frac{b(x+1) - b(x)
}{1-b(x)}, \frac{b(x) - b(x+1)}{1-b(x+1) }
\leq\frac{e \lambda}{(e-1) n}$.
We have
\begin{eqnarray*}
\frac{\mathbb{P}[Y\in \{T_0<T_{u}^+ \} ]}{\mathbb
{P}[X\in \{T_0<T_{u}^+  \}]} &\geq & \frac{\mathbb{P}[Y_1 = X_1,  X \in \{T_0<T_{u}^+
 \} ]}{\mathbb{P}[X\in \{T_0<T_{u}^+  \}]}
\\
&=&  \mathbb{P}\bigl[Y_1=X_1 | X\in \bigl
\{T_0<T_{u}^+ \bigr\} \bigr].
\end{eqnarray*}

Now, if $b(x) \leq b(x+1)$, then as $n \to\infty$,
\begin{eqnarray*}
&& \mathbb{P}\bigl[ Y_1 = X_1 |  X \in \bigl\{T_0 < T_u^+ \bigr\} \bigr]
\\
&&\qquad \geq \sum_k \mathbb{P}\bigl[
X_1 = k | X \in \bigl\{T_0 < T_u^+
\bigr\} \bigr] \cdot \mathbb{P}\biggl[ \operatorname{Bin} \biggl( n - k,
\frac{b(x+1) - b(x) }{1-b(x)} \biggr) =0\biggr]
\\
&&\qquad \geq\mathbb{P}\biggl[ \operatorname{Bin} \biggl( n, \frac{e \lambda}{(e-1) n}
\biggr) =0 \biggr] \to e^{- e \lambda/ (e-1) } > 0.
\end{eqnarray*}
Similarly when $b(x) > b(x+1)$,
\[
\mathbb{P}\bigl[ Y_1 = X_1 | X \in \bigl\{T_0 < T_u^+ \bigr\} \bigr]  \geq \mathbb{P}\biggl[
\operatorname{Bin} \biggl( n, \frac{e \lambda}{(e-1) n} \biggr) =0 \biggr].
\]
We may take $\kappa: = \inf_n \mathbb{P}[ \operatorname{Bin} ( n, \frac
{e \lambda}{(e-1) n} ) =0 ]$
to complete the proof.
\end{pf}

%le10 #&#
\begin{lem}[(Geometric upper bound)]
\label{lem:geometric upper bound}
Fix $\lambda>1$ and $\varepsilon>0$ small. Then
there exists $\theta=\theta(\lambda,\varepsilon)\in(0,1)$ such
that for all $0 \leq x < u \le\mathrm{eq}- \varepsilon n$,
\[
\mathbb{P}_x \bigl[ T_0 < T_{u}^+ \bigr]
\leq\theta^x.
\]
\end{lem}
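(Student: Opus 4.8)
The plan is to prove the geometric bound $\Pr_x[T_0 < T_u^+] \le \theta^x$ by combining the per-step multiplicative decay of Lemma~\ref{lem:beta} with a supermartingale argument that extends the control above level $\ep n$ up to level $u = \eq - \ep n$. The point is that Lemma~\ref{lem:beta} gives us $g(x+1) \le \beta g(x)$ only for $g(x) = \Pr_x[T_0 < T_{\ep n}^+]$, i.e. only below level $\ep n$; here we need a statement reaching all the way to $u$, which can be as large as order $n$. So the first step is to fix $\eps$ small enough to satisfy Lemma~\ref{lem:beta}, and observe that for $x < \ep n$ we already get $\Pr_x[T_0 < T_u^+] \le \Pr_x[T_0 < T_{\ep n}^+] \le \beta^x \cdot g(0) \le \beta^x$ (using $g(0)=1$ and $\beta < 1$), since hitting $u \ge \ep n$ is harder than hitting $\ep n$. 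This handles the easy range $x < \ep n$.

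For $\ep n \le x < u$, the strategy is a one-step-forward argument: decompose $\Pr_x[T_0 < T_u^+]$ by first step, using that the chain typically moves by a multiplicative factor. On $X_t = x$ with $\ep n \le x < u < \eq$, the mean $m(x) = nb(x)$ is comparable to $x$ up to constants (since $b(x) = \frac{\la x}{n} e^{-\la x/n}$, and $\la x/n$ is bounded above by $\la u/n < \log \la$, so $e^{-\la x/n}$ is bounded below by $1/\la$; thus $m(x) \ge x$, the chain drifts \emph{upward} in this regime). This means the chain has an upward drift, and to go extinct it must first perform an atypically large downward step, with exponentially small probability by the binomial large deviations bound quoted in the preliminaries: $\Pr_x[X_1 < \xi x] \le \exp(-c n)$ for suitable $\xi$. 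So from any $x \ge \ep n$, with overwhelming probability the chain climbs, and in $O(1)$ steps (a bounded number, independent of $n$) it exceeds $u$; hence $\Pr_x[T_0 < T_u^+]$ is at most something like $\exp(-c\ep n)$ uniformly over $\ep n \le x < u$. Since $x \le u \le \eq - \ep n = O(n)$, we can absorb this into $\theta^x$ by choosing $\theta$ close enough to $1$: we need $\exp(-c \ep n) \le \theta^{x}$ for all $x \le \eq$, i.e. $\theta \ge \exp(-c\ep n / \eq) = \exp(-c' )$, a constant bound, so a suitable $\theta = \theta(\la, \ep) \in (0,1)$ exists.

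The cleanest way to package the $x \ge \ep n$ case is probably to argue that $\Pr_x[T_{u}^+ < T_0] \ge 1 - \exp(-c\ep n)$ for all $\ep n \le x < u$, by iterating the large-deviation bound: as long as the chain stays in $[\ep n, u)$ it has mean at least $x \ge \ep n$, so each step it fails to increase by a fixed multiplicative factor only with probability $\le \exp(-c\ep n)$, and after a bounded number of successful multiplicative increases it overshoots $u$ (this is why $u \le \eq - \ep n$ rather than $u$ arbitrary matters — the drift stays positive). A union bound over this bounded number of steps gives $\Pr_x[T_0 < T_u^+] \le \exp(-c'\ep n)$ for a possibly smaller constant $c'$. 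Combining: for $x < \ep n$ use $\beta^x \le \theta^x$ (take $\theta \ge \beta$); for $\ep n \le x < u$ use $\exp(-c'\ep n) \le \theta^{\ep n} \le \theta^x$ (take $\theta \ge \exp(-c')$). Setting $\theta = \max\{\beta, \exp(-c')\} < 1$ finishes the proof.

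The main obstacle I expect is the $x \ge \ep n$ regime: one must verify carefully that the condition $u \le \eq - \ep n$ keeps the chain's drift strictly positive throughout $[\ep n, u)$ (so that the climb-to-$u$ argument terminates in a bounded number of steps with the right large-deviation estimates), and that the number of steps needed is genuinely $O(1)$ independent of $n$ — this requires the multiplicative-increase estimate $\Pr_x[X_1 \ge (1+\delta) x] \ge 1 - \exp(-c\ep n)$ to hold uniformly, which in turn needs $m(x)/x$ bounded below by something strictly bigger than $1$ on the whole interval, a consequence of $\la x / n < \log \la$ but worth checking at the endpoint $x$ near $u \approx \eq - \ep n$.
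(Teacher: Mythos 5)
Your first regime contains a genuine error. For $x<\ep n$ you claim $\Pr_x[T_0<T_u^+]\le \Pr_x[T_0<T_{\ep n}^+]$ ``since hitting $u\ge \ep n$ is harder than hitting $\ep n$'', but the inequality goes the other way: since $T_{\ep n}^+\le T_u^+$ pointwise, the event $\{T_0<T_{\ep n}^+\}$ is \emph{contained} in $\{T_0<T_u^+\}$, so in fact $\Pr_x[T_0<T_u^+]\ge \Pr_x[T_0<T_{\ep n}^+]$. The discrepancy consists precisely of trajectories that climb above $\ep n$, stay below $u$, and later return to $0$, and Lemma~\ref{lem:beta} gives no control over these; so the range $0\le x<\ep n$ is not handled by the argument as written. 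It can be repaired by a first-passage decomposition at $T_{\ep n}^+$, namely $\Pr_x[T_0<T_u^+]\le \Pr_x[T_0<T_{\ep n}^+]+\sup_{y\ge \ep n}\Pr_y[T_0<T_u^+]$, feeding in your second-regime estimate $\sup_{y\ge\ep n}\Pr_y[T_0<T_u^+]\le e^{-c\ep n}\le (e^{-c})^{x}$ (valid since $x<\ep n$), and then absorbing $\beta^x+(e^{-c})^x$ into $\theta^x$, with a separate word for $x$ of constant size where one must still check the probability is bounded away from $1$ (e.g.\ via Lemma~\ref{lem:sub and super} together with the same decomposition). Note also that this route inherits the restrictions of Lemma~\ref{lem:beta} ($\ep\le\eta$ and $n\ge n_0(\ep)$), so even once repaired it proves a slightly weaker statement than the lemma, which holds for every $n$. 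Your second regime ($\ep n\le x<u$) is essentially sound: $m(x)/x=\la e^{-\la x/n}\ge e^{\la\ep}>1$ uniformly on $[0,\eq-\ep n]$, so a union bound over a bounded number of multiplicative climbs does give $e^{-c\ep n}$, which is $\le\theta^x$ since $x\le \tfrac{\log\la}{\la}n$; but this covers only the upper range.

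For comparison, the paper's proof avoids all of this with a single supermartingale: take $\theta=q(e^{\la\ep})$, the extinction probability of a Galton--Watson process with $\mathrm{Poi}(e^{\la\ep})$ offspring, so $\theta=e^{-e^{\la\ep}(1-\theta)}$. Since $\la e^{-\la x/n}\ge e^{\la\ep}$ for all $x<u\le\eq-\ep n$, one checks directly that $\E[\theta^{X_{k+1}}\mid X_k=x]\le (e^{-\la e^{-\la x/n}(1-\theta)})^x\le\theta^x$, so $(\theta^{X_k})$ stopped at $T_0\wedge T_u^+$ is a bounded supermartingale, and optional stopping yields $\Pr_x[T_0<T_u^+]\le\theta^x$ for all $0\le x<u$ simultaneously, with no case split, no appeal to Lemma~\ref{lem:beta} or large deviations, and uniformly in $n$. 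You may want to internalize this device, as it is the same $q$-supermartingale trick used in Lemma~\ref{lem:sub and super}.
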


\begin{pf}
Consider the probability generating function of a
$\operatorname{Poi}(e^{\lambda\varepsilon})$ random variable.
Since $e^{\lambda\varepsilon}>1$, this function has a unique
nontrivial fixed point
$0 < \theta< 1$, satisfying $\theta=e^{-e^{\lambda\varepsilon
}(1-\theta)}$.
[Here $\theta= q(e^{\lambda\varepsilon})$ is the probability of
extinction of a Galton--Watson process
with Poisson-$e^{\lambda\varepsilon}$ offspring distribution.]

Note that for any $0\leq x<u \le\mathrm{eq}- \varepsilon n$, we have
$\lambda e^{-\lambda x/n} \geq e^{\lambda\varepsilon}$, so
\begin{eqnarray*}
\mathbb{E}\bigl[\theta^{X_{k+1}} | X_k=x\bigr]&=& \bigl(1-b(x)
(1-\theta)\bigr)^n \leq e^{-nb(x)(1-\theta)} %&=\Big[ e^{\frac{{-nb(x)(1-\theta)}}{x} }\Big]^x
\\
& =&  \bigl[ e^{-\lambda e^{-{\lambda x}/{n}}(1-\theta)} \bigr]^x \leq \bigl[ e^{-e^{\lambda\varepsilon}(1-\theta)}
\bigr]^x =\theta^x.
\end{eqnarray*}
%
%where the last inequality follows from the assumption that $x<eq=\frac{
%\log\la}{\la}n$.
This implies $(\theta^{X_k} )_{k=0}^{ T_{u}^+}$ is a supermartingale.
Since it is bounded, and $T_0\wedge T_{u}^+$ is a.s. finite, we may
apply the optional stopping theorem to this supermartingale with
$X_0=x$ for some $0\leq x<u$. We obtain
\begin{eqnarray*}
\theta^x  &\geq & \mathbb{E}_x\bigl[\theta^{X_{T_0\wedge T_{u}^+}}
\bigr] =\mathbb{E}_x\bigl[\theta^{X_{T_0}}\mathbh{1}_{ \{T_0<T_{u}^+
 \} }
\bigr] +\mathbb{E}_x\bigl[\theta^{X_{T_{u}^+}}\mathbh{1}_{ \{T_0>T_{u}^+
 \} }
\bigr] \\
&\geq &  \mathbb{P}_x \bigl[ T_0 <
T_{u}^+ \bigr].
\end{eqnarray*}
\upqed\end{pf}

%s3.3 #&#
\subsection{Coupling with subcritical branching process}

We want to investigate what our process behaves like when conditioned
on $T_0<T_{u}^+$ for $u = \varepsilon n$ and $u = \mathrm{eq}-\varepsilon n$. Let $\varphi(x) = \varphi_u(x) := \mathbb
{P}_x[T_0 < T_u^+]$. We denote the transition matrix of the tilted
chain as $p_\varphi(\cdot,\cdot)$ which is obtained by applying
Doob's $h$-transform to the original transition matrix $p(\cdot,\cdot)$
w.r.t. the harmonic function $\varphi$.
The matrix $p_\varphi$ is given by
%
%e5 #&#
\begin{equation}
\label{eq:doob}
\hspace*{3pt}\quad p_\varphi(x,y)=\mathbb{P}_x
\bigl[X_1=y|T_0<T_{u}^+\bigr]=
\frac{\mathbb{P}_x[X_1=y,T_0<T_{u}^+]}{\mathbb{P}_x[T_0<T_{u}^+]} =\frac{\varphi(y)p(x,y)}{\varphi(x)}.
\end{equation}

%le11 #&#
\begin{lem}
\label{lem:st1}
Let $0 < u < n$ and $\varphi(x) = \mathbb{P}_x[T_0 < T_u^+]$.
Suppose $\varphi(y+1)\leq\beta\varphi(y)$ for some $\beta>0$ and
for all $y\geq0$. Then for any $0\leq x< u$, the probability measure
$p_\varphi(x,\cdot)$ is stochastically dominated by the probability
measure $\mu_x$, where
\[
\mu_x(y)\propto\beta^y p(x,y),\qquad  y\geq0.
\]
\end{lem}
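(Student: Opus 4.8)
The plan is to establish the stochastic domination by directly comparing cumulative distribution functions, or equivalently, by exhibiting a pointwise inequality on the ``upper tails'' of $p_\vp(x,\cdot)$ and $\mu_x$. First I would write out both measures explicitly: $p_\vp(x,y) = \vp(y) p(x,y) / \vp(x)$ and $\mu_x(y) = \beta^y p(x,y) / Z_x$ where $Z_x = \sum_{z \ge 0} \beta^z p(x,z)$ is the normalizing constant. The key structural observation is that the hypothesis $\vp(y+1) \le \beta \vp(y)$ for all $y \ge 0$ implies, by induction, that $\vp(y) \le \beta^{y} \vp(0)$ for all $y \ge 0$; more usefully, the ratio $\vp(y)/\beta^{y}$ is non-increasing in $y$. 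This monotonicity of the ratio is exactly the ``monotone likelihood ratio'' type condition that yields stochastic domination.

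Concretely, I would argue as follows. To show $p_\vp(x,\cdot) \preceq \mu_x$, it suffices to show that for every threshold $k \ge 0$, $\sum_{y \ge k} p_\vp(x,y) \le \sum_{y \ge k} \mu_x(y)$. Equivalently, since both are probability measures, it suffices to show $\sum_{y \le k} p_\vp(x,y) \ge \sum_{y \le k} \mu_x(y)$ for all $k$. Writing everything over a common reference, this reduces to showing that the ``tilting factor'' going from $p_\vp(x,\cdot)$ to $\mu_x(\cdot)$, namely the Radon--Nikodym-type ratio $\mu_x(y)/p_\vp(x,y) = \dfrac{\beta^{y} \vp(x)}{\vp(y) Z_x}$, is a non-decreasing function of $y$ (on the support). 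But this ratio is $\dfrac{\vp(x)}{Z_x} \cdot \dfrac{\beta^{y}}{\vp(y)}$, and $\beta^{y}/\vp(y)$ is non-decreasing in $y$ precisely because $\vp(y)/\beta^{y}$ is non-increasing, which is the induction from the hypothesis $\vp(y+1) \le \beta\vp(y)$. The standard fact that if $\nu$ is obtained from $\mu$ by reweighting with a non-decreasing likelihood ratio then $\mu \preceq \nu$ then finishes the argument; I would either cite this or include the two-line proof (split the tail sum at the crossover point of the two measures).

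A mild care point: the measure $p_\vp(x,\cdot)$ is only supported on $\{0,1,\dots,u-1\}$ together with the absorbing atom at $0$ — more precisely $\vp(y) = 0$ for $y \ge u$, so $p_\vp(x,y) = 0$ there, whereas $\mu_x$ may put mass on $y \ge u$. This actually helps: it only pushes more mass of $\mu_x$ to the right, strengthening the domination, but I should phrase the likelihood-ratio comparison so that division by $\vp(y) = 0$ is handled cleanly (e.g.\ restrict attention to $y$ in the support of $p_\vp(x,\cdot)$ and note all remaining $\mu_x$-mass sits above that support). I would also note $Z_x > 0$ and $\vp(x) > 0$ for $x < u$ so all the ratios are well defined.

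The main obstacle is essentially bookkeeping rather than conceptual: making the monotone-likelihood-ratio $\Rightarrow$ stochastic-domination step rigorous when the two measures have different supports, and being careful that the inequality $\vp(y+1) \le \beta\vp(y)$ is applied in the right direction to conclude $\beta^{y}/\vp(y)$ is non-decreasing. I do not anticipate any analytic difficulty; once the ratio monotonicity is in hand the domination is immediate from the classical coupling/CDF argument. I would keep the write-up short, stating the ratio-monotonicity lemma and then invoking it.
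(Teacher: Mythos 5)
Your proposal is correct and follows essentially the same route as the paper: the hypothesis $\vp(y+1)\leq\beta\vp(y)$ iterates to the cross-product inequality $\vp(z)\beta^{y}\leq\vp(y)\beta^{z}$ for $y<z$ (i.e.\ monotonicity of $\beta^{y}/\vp(y)$), which yields the comparison of the cumulative distribution functions; the paper simply proves the likelihood-ratio-implies-domination step inline by cross-multiplying and pairing terms $\sum_{y\leq k}\sum_{z>k}p(x,y)p(x,z)\bigl(\vp(y)\beta^{z}-\vp(z)\beta^{y}\bigr)\geq 0$, whereas you cite the standard MLR fact, and your handling of the support issue (where $\vp\equiv 0$ above $u$) is consistent with the paper's computation.
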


\begin{pf}
Let $Y \sim p_\varphi(x,\cdot), Z \sim\mu_x$.
We need to show that for any $0\leq x< u$ and $k\geq0$,
$\mathbb{P}[Y\leq k]\geq\mathbb{P}[Z\leq k]$,
or equivalently,
\[
\frac{\sum_{y=0}^{k}p(x,y)\varphi(y)}{\varphi(x)}- \frac{\sum_{y=0}^{k}p(x,y)\beta^y}{\sum_{z=0}^{\infty} p(x,z)\beta^z} \geq0.
\]
Since $\varphi(x)$ is harmonic with respect to $p(\cdot,\cdot)$,
this is equivalent to showing\vspace*{-2pt} that
\[
\frac{\sum_{y=0}^{k}p(x,y)\varphi(y)}{\sum_{z=0}^{\infty}
p(x,z)\varphi(z)}- \frac{\sum_{y=0}^{k}p(x,y)\beta^y}{\sum_{z=0}^{\infty} p(x,z)\beta^z} \geq0.
\]
So\vspace*{-2pt} we write
\begin{eqnarray*}
&& \sum_{y=0}^{k}  p(x,y)\varphi(y)\sum
_{z=0}^{\infty} p(x,z)\beta ^z-\sum
_{y=0}^{k}p(x,y)\beta^y\sum
_{z=0}^{\infty} p(x,z)\varphi (z)
\\[-3pt]
%&\textit{(summands with both $y,z\geq k$ appear in both terms and
%therefore cancel out)}\\
%&=\sum_{y=k}^{\infty}p(x,y)h(y)\sum_{z=0}^{k-1} p(x,z)\beta^z-
%\sum_{y=k}^{\infty}p(x,y)\beta^y\sum_{z=0}^{k-1} p(x,z)h(z)=\\
%&=\sum_{y=k}^{\infty}\sum_{z=0}^{k-1}
%p(x,y)p(x,z)h(y)\beta^z-
%\sum_{y=k}^{\infty}\sum_{z=0}^{k-1}
%p(x,y)p(x,z)h(z)\beta^y \\
&& \qquad =
\sum_{y=0}^{k}\sum
_{z=k+1}^{\infty} p(x,y)p(x,z) \bigl(\varphi(y)
\beta^z-\varphi(z)\beta^y\bigr).
\end{eqnarray*}
%
%since summands with both $y,z\leq k$ appear in both terms and
%therefore cancel out.
Note that for $0\leq y<z$, by the assumption,
\[
\varphi(z)\beta^y\leq\varphi(y)\beta^{z-y}
\beta^y=\varphi (y)\beta^z,
\]
which implies that each term of the above sum is nonnegative.
\end{pf}

%le12 #&#
\begin{lem}
\label{lem:st3}
Let $0 < u < n$ and $\varphi(x) = \mathbb{P}_x[T_0 < T_u^+]$. Suppose
$\varphi(y)\geq\kappa\varphi(y-1)$ for some $\kappa>0$ and for all
$0 < y < u$. Then for any $0\leq x< u$, the probability measure
$p_\varphi(x,\cdot)$ stochastically dominates the probability measure
$\nu_x$, where\vspace*{-4pt}
\[
\nu_x(y)\propto\kappa^y p(x,y) \mathbh{1}_{ \{y < u \}}.
\]
\end{lem}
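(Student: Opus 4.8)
The plan is to mirror the proof of Lemma~\ref{lem:st1}, with the inequality reversed, since Lemma~\ref{lem:st3} is the natural dual statement: instead of an upper bound $\vp(y+1) \le \beta \vp(y)$ producing stochastic domination \emph{from above} by a geometrically tilted measure, here the lower bound $\vp(y) \ge \kappa \vp(y-1)$ should produce stochastic domination \emph{from below} by the measure $\nu_x$, which is the same tilt $\kappa^y p(x,y)$ but truncated at level $u$ (the truncation is needed because $\vp(y) = 0$ for $y \ge u$, so the tilted chain $p_\vp(x,\cdot)$ is automatically supported on $\set{0,\ldots,u-1}$).

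First I would set $Y \sim p_\vp(x,\cdot)$ and $Z \sim \nu_x$ and reduce the claim $\Pr[Y \le k] \le \Pr[Z \le k]$ for all $k \ge 0$ to an algebraic inequality. Using $p_\vp(x,y) = \vp(y)p(x,y)/\vp(x)$ and the harmonicity $\sum_z p(x,z)\vp(z) = \vp(x)$, the statement $\Pr[Y \le k] \le \Pr[Z \le k]$ becomes
$$
\frac{\sum_{y=0}^{k} p(x,y)\vp(y)}{\sum_{z=0}^{\infty} p(x,z)\vp(z)} \le \frac{\sum_{y=0}^{k} p(x,y)\kappa^y \1{y<u}}{\sum_{z=0}^{\infty} p(x,z)\kappa^z \1{z<u}} .
$$
Since $\vp(z) = 0$ for $z \ge u$, I can freely insert the indicator $\1{z<u}$ into the sums on the left as well, so all four sums are over $\set{0,\ldots,u-1}$ effectively. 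Cross-multiplying and cancelling the common block reduces everything to showing
$$
\sum_{0 \le y \le k} \sum_{k < z < u} p(x,y)p(x,z)\bigl( \vp(z)\kappa^y - \vp(y)\kappa^z \bigr) \ge 0 ,
$$
which is the exact mirror of the double-sum identity in Lemma~\ref{lem:st1}.

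Then I would check term-by-term positivity. For $0 \le y < z < u$, iterating the hypothesis $\vp(t) \ge \kappa \vp(t-1)$ gives $\vp(z) \ge \kappa^{z-y}\vp(y)$, hence $\vp(z)\kappa^y \ge \kappa^{z-y}\vp(y)\kappa^y = \vp(y)\kappa^z$, so each summand is non-negative; this is the only place the hypothesis is used. I should be slightly careful that when $k \ge u-1$ the inner sum over $k < z < u$ is empty and the inequality is trivial (both $Y$ and $Z$ are supported below $u$), and that the denominators are strictly positive (for the $\vp$-side this uses $\vp(x) > 0$ for $x < u$, which holds since from $x$ there is positive probability to reach $0$ before $u$; for the $\nu_x$-side it is clear since $p(x,0) > 0$).

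The main obstacle is essentially bookkeeping rather than a genuine difficulty: making sure the truncation indicator $\1{y<u}$ is handled consistently on both sides so that the harmonicity cancellation goes through cleanly, and confirming that the dual direction of the rearrangement inequality really does follow from reversing the sign in the Lemma~\ref{lem:st1} computation. Once the reduction to the double sum is in place, the proof is a one-line application of the iterated lower bound. I would present it as a short proof that explicitly parallels Lemma~\ref{lem:st1}, pointing out the two differences (reversed inequality, truncated support).
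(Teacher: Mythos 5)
Your proposal is correct and is essentially the paper's own argument: the paper simply says the proof is ``exactly similar'' to Lemma~\ref{lem:st1} with the upper summation limit $\infty$ replaced by $u-1$, and your write-up fills in exactly those details (truncated tilt, harmonicity cancellation, and the reversed term-by-term inequality $\vp(z)\kappa^y \ge \vp(y)\kappa^z$ from iterating the hypothesis). No gaps; your bookkeeping about the indicator $\1{y<u}$ and positivity of the denominators is the only extra care needed, and you handle it correctly.
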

\begin{pf}
The proof is exactly similar to that of Lemma~\ref{lem:st1} where we
replace ``$\infty$'' in the bounds of the summands by $u-1$. We omit
the details.
\end{pf}

%le13 #&#
\begin{lem}
\label{lem:st2}
\textup{(a)} Fix $0<p<1$. Let $X\sim\operatorname{Bin}(n,p)$ and $Y\sim\operatorname{Poi}(-n\log(1-p))$.
Then $X\leq_{\mathrm{st}}Y$.

\textup{(b)} Fix $0< p_1 < p_2 < 1$. Let $X\sim\operatorname{Bin}(n,p_1)$ and $X\sim
\operatorname{Bin}(n,p_2)$. Then for any $m \ge0$, we have $X | \{X \le m\}
\leq_{\mathrm{st}}Y | \{Y \le m\}$.
\end{lem}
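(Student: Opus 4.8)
For part (a), the plan is to exhibit an explicit monotone coupling. First I would recall the standard "thinning/superposition" description: if $N_1, N_2, \dots, N_n$ are i.i.d.\ $\mathrm{Poi}(-\log(1-p))$, then $\sum_{i=1}^n N_i \sim \mathrm{Poi}(-n\log(1-p))$, and each indicator $\mathbbm{1}_{\set{N_i \ge 1}}$ is Bernoulli with success probability $1 - e^{-(-\log(1-p))} = 1 - (1-p) = p$, independently across $i$. Hence $\sum_{i=1}^n \mathbbm{1}_{\set{N_i \ge 1}} \sim \mathrm{Bin}(n,p)$. Since $\mathbbm{1}_{\set{N_i \ge 1}} \le N_i$ pointwise, summing gives $\mathrm{Bin}(n,p) \le \mathrm{Poi}(-n\log(1-p))$ almost surely under this coupling, which is exactly the claimed stochastic domination $X \le_{st} Y$. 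This is the clean way to see it; one could alternatively compare tail probabilities directly using the inequality $e^{-nt} \ge (1-t)^n$ already listed in the preliminaries, but the coupling is more transparent.

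For part (b) — where I assume the intended statement is $X \sim \mathrm{Bin}(n,p_1)$, $Y \sim \mathrm{Bin}(n,p_2)$ with $p_1 < p_2$, and the claim is $X \mid \set{X \le m} \le_{st} Y \mid \set{Y \le m}$ — the plan is to reduce to a statement about a single pair of coupled binomials and then truncate. The key structural fact is that $\mathrm{Bin}(n,p_2)$ can be written as $\mathrm{Bin}(n,p_1)$ plus an independent "extra successes" contribution; more precisely, run $n$ independent experiments where experiment $i$ produces $(U_i, V_i)$ with $U_i \le V_i$ coordinatewise, $U_i \sim \mathrm{Ber}(p_1)$, $V_i \sim \mathrm{Ber}(p_2)$ (take $U_i = \mathbbm{1}_{\set{\xi_i \le p_1}}$, $V_i = \mathbbm{1}_{\set{\xi_i \le p_2}}$ for uniform $\xi_i$). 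Set $X = \sum U_i$, $Y = \sum V_i$, so $X \le Y$ pointwise. Then I would invoke the general principle that if $X \le Y$ a.s.\ then the conditional laws satisfy $X \mid \set{X \le m} \le_{st} Y \mid \set{Y \le m}$, i.e.\ for all $k$, $\Pr[X \le k \mid X \le m] \ge \Pr[Y \le k \mid Y \le m]$. The cleanest route to this last implication is the ratio/likelihood-ordering (MLR) characterization: both $\mathrm{Bin}(n,p_1)$ and $\mathrm{Bin}(n,p_2)$ have explicit pmfs whose ratio $\binom{n}{y}p_2^y(1-p_2)^{n-y} / \binom{n}{y}p_1^y(1-p_1)^{n-y} = (p_2/p_1)^y ((1-p_2)/(1-p_1))^{n-y}$ is monotone increasing in $y$; since conditioning on $\set{\cdot \le m}$ just restricts the support to $\set{0,\dots,m}$ and renormalizes, the monotone likelihood ratio is preserved, and monotone likelihood ratio ordering implies stochastic ordering. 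Either argument works; I would probably present the MLR one since the pmf ratio is already the kind of quantity manipulated in Lemma~\ref{lem:gamma}.

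The main obstacle is essentially bookkeeping rather than depth: one must be careful that conditioning on the truncation event $\set{X \le m}$ versus $\set{Y \le m}$ involves \emph{different} events on the same probability space (under the coupling $\set{Y \le m} \subseteq \set{X \le m}$), so the naive "coupling is monotone, hence conditionals are ordered" slogan is not literally valid — conditioning a monotone coupling on differing events can destroy the pointwise inequality. This is why I would route the truncated comparison through the likelihood-ratio characterization, which is insensitive to how the two measures sit relative to each other and only uses that each, restricted to $\set{0,\dots,m}$, retains the MLR property. Once that characterization is in hand, part (b) follows immediately; part (a) is then just the superposition coupling above with no subtlety.
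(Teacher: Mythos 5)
Your proposal is correct and matches the paper's argument: part (a) is proved there by exactly the same superposition coupling ($X_j=\mathbbm{1}_{\{Y_j>0\}}$ with $Y_j$ i.i.d.\ Poisson), and part (b) by the same likelihood-ratio computation, phrased as the cross-product inequality $\Pr[X=i]\Pr[Y=j]\ge \Pr[X=j]\Pr[Y=i]$ for $i<j\le m$, which is your MLR observation. Your remark about why a naive monotone coupling does not directly give the truncated comparison is a sensible justification for choosing this route, and you correctly read the statement's typo as $Y\sim\mathrm{Bin}(n,p_2)$.
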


\begin{pf}
(a) We exhibit a coupling such that $X\leq Y$. Note that
$X=X_1+\cdots+X_n$ where $X_1,\ldots,X_n$ are i.i.d.
$\operatorname{Ber}(p)$ random variables, and $Y=Y_1+\cdots+Y_n$ where
$Y_1,\ldots,Y_n$ are i.i.d.
$\operatorname{Poi}(-\log(1-p))$ random variables. So let $Y_1,\ldots,Y_n$ be
as such, and let $X_j : = \mathbh{1}_{ \{Y_j > 0  \} }$.
It follows that $X_j \leq Y_j$ and
$\mathbb{P}[ X_j = 0 ] = \mathbb{P}[ Y_j = 0 ] = (1-p)$, suggesting
that indeed $X_1,\ldots, X_n$ are i.i.d. $\operatorname{Ber}(P)$, and $X \leq Y$ a.s.

(b) It suffices to show for any $k \le m$,
\[
\frac{\mathbb{P}[ X \le k]}{\mathbb{P}[ X \le m]} \ge\frac{\mathbb
{P}[ Y \le k]}{\mathbb{P}[ Y \le m]},
\]
which, in turn, is implied by $\mathbb{P}[ X = i] \mathbb{P}[ Y = j]
\ge\mathbb{P}[ X = j] \mathbb{P}[ Y = i] $ for all $0\le i < j \le
m$. Upon rearrangement of terms, the above is equivalent to
\[
\frac{ ( {p_1}/({1-p_1})  )^i}{  ( {p_2}/({1-p_2})  )^i} \ge \frac{ ( {p_1}/({1-p_1})
)^j}{  ( {p_2}/({1-p_2}))^j},
\]
which is obviously true.
\end{pf}

%le14 #&#
\begin{lem}
\label{lem:coupling1}
Denote the Markov chain conditioned on $T_0<T_{\varepsilon n}^+$ by\break 
$(X'_m)_{m \ge0}$.
There exists $\varepsilon_0>0$ and $0<\bar{\gamma}< 1$
such that for all $\varepsilon< \varepsilon_0$ there exists $n_0 =
n_0(\varepsilon)$
such that for all $n > n_0$ the following holds.
For any $0 \le x_0 < \varepsilon n$, we can couple $ ( X'_m )_{m\geq0}$
with a sub-critical Galton--Watson process $( W_m )_{m\geq0}$ having
offspring distribution
$\operatorname{Poi}(\bar{\gamma})$, such that
\[
X'_0 = W_0 = x_0,\qquad
X'_m\leq W_m\qquad \forall m\geq1.
\]
\end{lem}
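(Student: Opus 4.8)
The plan is to combine the two stochastic-domination lemmas just proved (Lemma~\ref{lem:st1} and Lemma~\ref{lem:st2}(a)) with the multiplicative bound on $g$ from Lemma~\ref{lem:beta}. First I would fix $\eps_0 \le \eta$ (where $\eta$ is from Lemma~\ref{lem:beta}) and $n_0$ large enough that Lemma~\ref{lem:beta} applies, so that the harmonic function $\vp(x) = g(x) = \Pr_x[T_0 < T_{\eps n}^+]$ satisfies $\vp(y+1) \le \beta \vp(y)$ for all $y \ge 0$ with some fixed $\beta < 1/\lambda$. By Lemma~\ref{lem:st1}, the one-step conditional law $p_\vp(x,\cdot)$ of the conditioned chain $(X'_m)$ is then stochastically dominated by $\mu_x$, the measure proportional to $\beta^y p(x,y)$, $y \ge 0$.

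The next step is to identify $\mu_x$ explicitly. Since $p(x,\cdot)$ is the $\mathrm{Bin}(n,b(x))$ law, the tilted measure $\mu_x(y) \propto \beta^y \binom{n}{y} b(x)^y (1-b(x))^{n-y}$ is again a binomial law, namely $\mathrm{Bin}\big(n, \tfrac{\beta b(x)}{1 - b(x) + \beta b(x)}\big)$. Because $b(x) \le \tfrac{\log\lambda}{\lambda}$ on $[0,\eps n)$ — actually I only need $b(x) \le \lambda\eps$ there, or even just $b(x) \le e^{-1}$ — the success parameter of this binomial is at most $\beta b(x)/(1-b(x)) \le \beta e/(e-1) \cdot b(x)$, and in any case it is bounded by a quantity of order $\eps$. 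Then I apply Lemma~\ref{lem:st2}(a): $\mathrm{Bin}(n,p) \le_{st} \mathrm{Poi}(-n\log(1-p))$, so $\mu_x \le_{st} \mathrm{Poi}(\rho_x)$ with $\rho_x = -n\log\big(1 - \tfrac{\beta b(x)}{1-b(x)+\beta b(x)}\big)$. Using $-\log(1-t) \le t/(1-t)$ and $b(x) = \tfrac{\lambda x}{n}e^{-\lambda x/n} \le \tfrac{\lambda x}{n}$, one gets $\rho_x \le \ugam x$ for a constant $\ugam = \ugam(\lambda,\beta,\eps)$; the point is that $\ugam \to \lambda\beta < 1$ as $\eps \to 0$ (since $b(x)/x \to \lambda \cdot (\text{something} \le 1)$ and the correction factors tend to $1$), so by shrinking $\eps_0$ further we can guarantee $\ugam < 1$. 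The superadditivity of the conditioned step — the mean of the next step being at most $\ugam$ times the current value — is exactly what lets us couple step by step with independent $\mathrm{Poi}(\ugam)$ offspring: conditionally on $X'_m = x$ (with $x < \eps n$), dominate $X'_{m+1}$ by a $\mathrm{Poi}(\ugam x)$ variable, which is distributed as the sum of $x$ i.i.d.\ $\mathrm{Poi}(\ugam)$ variables, one per particle; iterating and using monotonicity of the Galton-Watson process in its initial population gives $X'_m \le W_m$ for all $m$.

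Concretely, the induction is: having coupled so that $X'_m \le W_m$, on $\{X'_m = x\}$ we have $x \le W_m$, and we realize $X'_{m+1} \le_{st} \mathrm{Poi}(\ugam x)$, then embed this inside $\mathrm{Poi}(\ugam W_m) = $ (offspring of the $W_m$ particles of the GW process) by assigning the $\mathrm{Poi}(\ugam)$ family of each of the first $x$ GW-particles and adding the remaining $W_m - x$ families to pass from $X'_{m+1}$ up to $W_{m+1}$. (A small care point: this requires $X'_m < \eps n$ so that $p_\vp(X'_m,\cdot)$ is defined and Lemma~\ref{lem:st1} applies; but the conditioned chain never reaches $\eps n$ since $g \equiv 0$ there, so $X'_m < \eps n$ for all $m$ automatically, and the domination is vacuous once $W_m$ exceeds the relevant range.)

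The main obstacle is the bookkeeping in the last step: upgrading a \emph{one-step} stochastic domination of $p_\vp(x,\cdot)$ by $\mathrm{Poi}(\ugam x)$ into a \emph{pathwise} domination of the whole conditioned trajectory by a Galton-Watson trajectory. This needs a careful coupling that, at each step, (i) uses the one-step domination to realize $X'_{m+1}$ below a $\mathrm{Poi}(\ugam X'_m)$ variable, and (ii) uses the inductive hypothesis $X'_m \le W_m$ together with the additivity/monotonicity of $\mathrm{Poi}$ and of Galton-Watson to extend that variable up to $W_{m+1}$. One must also verify cleanly that the tilted one-step law is \emph{exactly} a binomial (so that Lemma~\ref{lem:st2}(a) is directly applicable) and track how the resulting Poisson parameter depends on $x$ to extract the clean bound $\rho_x \le \ugam x$ with $\ugam < 1$; the inequalities collected in the Preliminaries ($-\log(1-t) \le t/(1-t)$, i.e.\ $1-t \ge e^{-t/(1-t)}$, and $b(x) \le \lambda x/n$) make this routine once set up correctly.
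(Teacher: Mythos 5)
Your proposal is correct and follows essentially the same route as the paper: Lemma~\ref{lem:beta} plus Lemma~\ref{lem:st1} to dominate $p_\vp(x,\cdot)$ by the $\beta$-tilted binomial, identification of that tilt as $\mathrm{Bin}\bigl(n,\tfrac{\beta b(x)}{1-b(x)(1-\beta)}\bigr)$, Lemma~\ref{lem:st2}(a) to pass to a Poisson, and elementary bounds giving a Poisson mean at most $\ugam x$ with $\ugam\to\lambda\beta<1$ as $\eps\to0$. Your use of $-\log(1-t)\le t/(1-t)$ in place of the paper's $t+2t^2$ is an immaterial variation, and your explicit inductive particle-by-particle coupling is exactly the step the paper leaves implicit after noting $\mathrm{Poi}(x\ugam)\le_{st}\mathrm{Poi}(w\ugam)$ for $x\le w$.
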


\begin{pf}
Recall that the transition matrix of the conditioned chain $X'$ is
given by $p_\varphi(\cdot, \cdot)$ with $\varphi(x) = \mathbb
{P}_x[T_0 < T_{\varepsilon n}^+]$.
It suffices to show that for any nonnegative integers $x \leq w$,
$p_\varphi(x,\cdot)$ is stochastically dominated by a $\operatorname{Poi}(w \gamma)$ random variable.

By Lemmas \ref{lem:beta} and \ref{lem:st1}, we know that for small
enough $\varepsilon>0$ and large enough $n$, $p_\varphi(x,\cdot)$ is
stochastically dominated by $\mu_x(\cdot)$. Fix $0\leq x<\varepsilon
n$. Note that
\[
\mu_x(y) \propto   \beta^y p(x,y) \propto \pmatrix{n
\cr
y}
\biggl( \frac
{\beta b(x)}{1-b(x)} \biggr)^y,\qquad  0\leq y\leq n,
\]
which implies that $\mu_x(\cdot)$ is binomially distributed with $n$
trials and success probability $\theta(x)$ that satisfies
\[
\frac{\theta(x)}{1-\theta(x)}=\frac{\beta b(x)}{1-b(x)} \quad \mbox{or}\quad \theta(x)=\frac{\beta b(x)}{1-b(x)(1-\beta)}.
\]
Further, by Lemma~\ref{lem:st2}(a), $\mu_x(\cdot)$ is stochastically
dominated by a Poisson random variable with mean $g(x)=-n \log
(1-\theta(x))$.
Note that $b(x) \leq e^{-1} < \frac{1}2$. So $\beta b(x) < 1 - b(x)$,
and thus $\theta< \frac{1}2$.
Also, one easily checks that $-\log(1-t)\leq t+2 t^2$ for $t\in
[0,\frac{1}{2}]$.
We thus obtain
%\begin{eqnarray*}
%g(x)&\leq n(\theta(x)+2 \theta(x)^2)=n\theta(1+\theta)
%=n\frac{\beta b(x)}{1-b(x)(1-\beta)}(1+\frac{\beta b(x)}{1-b(x)(1-
%\beta)})
%\end{eqnarray*}
$g(x) \leq n \theta(x) (1 + 2 \theta(x) )$.
Recall that $b(x)=\frac{\lambda x}{n} e^{-{\lambda x}/{n}}$ is monotone
on $[0,\varepsilon n]$ so for $x < \varepsilon n$, we have $b(x)\leq
\lambda\varepsilon$.
Hence
\[
g(x)\leq x \cdot\frac{\beta\lambda}{1-\lambda\varepsilon} \cdot \biggl(1+\frac{2 \lambda\varepsilon}{1-\lambda\varepsilon} \biggr).
\]
Denote
$\bar{\gamma}:=\frac{\beta\lambda}{1-\lambda\varepsilon}(1+\frac
{\lambda\varepsilon}{1-\lambda\varepsilon})$.
Note that for $\varepsilon\to0$ we have that $\bar{\gamma}\to
\lambda\beta<1$ by
Lemma~\ref{lem:beta}. So choose $\varepsilon_0$ small enough so that
$\bar{\gamma}<1$. Putting all the ingredients together, we get that
for all $0\leq x<\varepsilon n$,
\[
p_\varphi(x,\cdot)\leq_{\mathrm{st}} \operatorname{Poi}(x\bar{\gamma}),
\]
and keeping in mind that for any two nonnegative integers $x\leq w$ we have
$\operatorname{Poi}(x\bar{\gamma})\leq_{\mathrm{st}} \operatorname{Poi}(w \bar{\gamma})$,
the proof is complete.
\end{pf}

%co15 #&#
\begin{cor}
\label{cor:extinction from small}
Fix $\lambda>1$. There exist $\varepsilon_0>0$
such that for all $\varepsilon< \varepsilon_0$ there exists $C>0$
such that the following holds for all $n \ge1$:
\[
\mathbb{E}_x\bigl[T_0|T_0<T_{\varepsilon n}^+
\bigr] \leq C \log(1+x),\qquad  0\leq x<\varepsilon n.
\]
\end{cor}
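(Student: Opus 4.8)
The plan is to deduce Corollary \ref{cor:extinction from small} directly from the coupling in Lemma \ref{lem:coupling1}. Fix $\la > 1$ and take $\eps_0$ as in Lemma \ref{lem:coupling1}, so that for $\eps < \eps_0$ and $n$ large we can couple the conditioned chain $(X'_m)_{m \ge 0}$ with a subcritical Galton-Watson process $(W_m)_{m \ge 0}$ having offspring distribution $\mathrm{Poi}(\ugam)$, $\ugam < 1$, with $X'_0 = W_0 = x$ and $X'_m \le W_m$ for all $m$. In particular, if $T_0^{W} = \inf\set{m \ge 0 : W_m = 0}$ is the extinction time of the branching process, then on the coupling event $X'_m = 0$ for all $m \ge T_0^W$, so $T_0 \le T_0^W$ under $\Pr_x[\,\cdot \mid T_0 < T_{\ep n}^+]$ (in distribution). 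Hence $\EX_x[T_0 \mid T_0 < T_{\ep n}^+] \le \EX[T_0^W \mid W_0 = x]$, and it remains to bound the expected extinction time of a subcritical Poisson-$\ugam$ Galton-Watson process started from $x$ particles.

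For the Galton-Watson bound I would argue as follows. Starting from $x$ particles, $W$ is the sum of $x$ independent copies of the process started from a single particle, so $T_0^W = \max_{1 \le i \le x} T_0^{(i)}$ where the $T_0^{(i)}$ are i.i.d. copies of the extinction time $T$ of a single-particle Poisson-$\ugam$ process. For a subcritical branching process the mean generation size decays geometrically, $\EX[W_m \mid W_0 = 1] = \ugam^m$, so by Markov's inequality $\Pr[T > m] = \Pr[W_m \ge 1 \mid W_0 = 1] \le \ugam^m$. Therefore
\begin{align*}
\EX[T_0^W \mid W_0 = x] &= \sum_{m \ge 0} \Pr[\max_i T_0^{(i)} > m] = \sum_{m \ge 0} \sr{ 1 - (1 - \Pr[T > m])^x } \\
&\le \sum_{m \ge 0} \sr{ 1 \wedge x \ugam^m } .
\end{align*}
Splitting the sum at $m^\ast := \lceil \log(1+x) / \log(1/\ugam) \rceil$: for $m \le m^\ast$ bound each term by $1$, contributing $O(\log(1+x))$; for $m > m^\ast$ bound by $x\ugam^m$, a geometric tail summing to $x \ugam^{m^\ast} / (1 - \ugam) = O(1)$. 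Altogether $\EX[T_0^W \mid W_0 = x] \le C \log(1+x)$ for a constant $C = C(\la,\eps)$ (using $\ugam = \ugam(\la,\eps)$), which gives the corollary for $n$ large; for the finitely many small $n < n_0(\eps)$ one absorbs them by enlarging $C$, since $\EX_x[T_0 \mid T_0 < T_{\ep n}^+] < \infty$ trivially on a finite state space and $x < \ep n$ is then bounded.

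I do not expect a serious obstacle here: the real work was done in establishing Lemma \ref{lem:beta} (the multiplicative decay of $g$) and Lemma \ref{lem:coupling1} (the domination by a genuinely subcritical branching process), and once those are in hand the corollary is a routine computation. The only mild subtlety is that the coupling in Lemma \ref{lem:coupling1} holds on the domination event only, so I must phrase the comparison at the level of the extinction \emph{times} — $T_0 \le T_0^W$ whenever $X'_m \le W_m$ for all $m$, because $W$ hitting $0$ forces $X'$ to be $0$ as well — rather than trying to couple the chains after extinction; but this is exactly what the statement $X'_m \le W_m$ for all $m \ge 1$ provides. A second point to handle cleanly is the range of validity in $n$: Lemma \ref{lem:coupling1} requires $n > n_0(\eps)$, so I state the bound with constant $C$ first for large $n$ and then note the small-$n$ cases are trivially absorbed, matching the ``for all $n \ge 1$'' in the statement.
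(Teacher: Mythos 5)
Your proposal is correct and follows the paper's intended route: the paper states the corollary as an immediate consequence of the coupling in Lemma~\ref{lem:coupling1}, and your supporting computation (that a subcritical Poisson-$\ugam$ Galton--Watson process started from $x$ particles has expected extinction time $O(\log(1+x))$, via $\Pr[T>m]\le\ugam^m$ and splitting the tail sum) is exactly the standard fact being invoked. Your handling of the two minor points --- comparing extinction times on the domination event and absorbing the finitely many $n<n_0(\eps)$ into the constant --- is also sound.
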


%le16 #&#
\begin{lem}
\label{lem:coupling2}
Denote the Markov chain conditioned on $T_0<T_{\mathrm{eq}-
\varepsilon n}^+$ by $(X''_m)_{m \ge0}$. Given $\lambda>1$ and
$0<\varepsilon< \frac{\log\lambda}{\lambda}$, there\vspace*{1pt} exists
$0<\underline{\gamma}< 1$
such that the following holds.
For any $0 \le x_0 < \mathrm{eq}- \varepsilon n$, we can couple $ (
X''_m )_{m\geq0}$
with a subcritical Galton--Watson process $ ( V_m )_{m\geq0}$ having
offspring distribution
$\operatorname{Ber}(\underline{\gamma})$, such that with probability at
least $1 - e^{-(1- \kappa)^2 (\lambda^{-1} \log\lambda- \varepsilon)^2n}$,
\[
X''_0 = V_0 =
x_0,\qquad  X''_m \geq
V_m \qquad \forall 1 \le m\leq e^{(1- \kappa)^2 (\lambda^{-1} \log\lambda- \varepsilon)^2n},
\]
where $\kappa\in(0,1)$ is as given in Lemma~\ref{lem:uniform lower
bound} with $u = \mathrm{eq}- \varepsilon n$.
\end{lem}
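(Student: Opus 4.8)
The plan is to build the coupling one step at a time, feeding off the one-step stochastic lower bound for the tilted kernel supplied by Lemmas~\ref{lem:uniform lower bound} and~\ref{lem:st3}, and paying a small error at each step for the truncation that appears there.

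Set $u:=\eq-\eps n$ and $\vp:=\vp_u$, and let $\kappa=\kappa(\la)\in(0,1)$ be as in Lemma~\ref{lem:uniform lower bound}, so $\vp(y)\ge\kappa\vp(y-1)$ for all $0<y<u$. Lemma~\ref{lem:st3} then gives, for every $0\le x<u$, that $p_\vp(x,\cdot)\geq_{st}\nu_x$ with $\nu_x(y)\propto\kappa^y p(x,y)\1{y<u}$. Writing $\kappa^y p(x,y)=\binom ny(\kappa b(x))^y(1-b(x))^{n-y}$ one identifies $\nu_x$ as the law of $\mathrm{Bin}(n,\theta(x))$ conditioned on $\{\cdot<u\}$, where $\theta(x):=\frac{\kappa b(x)}{1-(1-\kappa)b(x)}$, and $\kappa b(x)\le\theta(x)\le\frac e{e-1}\kappa b(x)$ since $b(x)\le e^{-1}$. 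I would then record two facts. First, for $x<u$ we have $\tfrac nx\theta(x)\ge\kappa\tfrac nx b(x)=\kappa\la e^{-\la x/n}\ge\kappa e^{\la\eps}\ge\kappa$, and $x<u\le\eq\le n/2$, so splitting the $n$ trials of $\mathrm{Bin}(n,\theta(x))$ into $x$ blocks of size $\ge\lfloor n/x\rfloor\ge n/(2x)$ and retaining one success per nonempty block gives $\mathrm{Bin}(n,\theta(x))\geq_{st}\mathrm{Bin}(x,\lgam)$ with $\lgam:=1-e^{-\kappa/2}\in(0,1)$; in particular $\mathrm{Bin}(n,\theta(x))\geq_{st}\mathrm{Bin}(w,\lgam)$ whenever $0\le w\le x$. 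Second, if $\widehat Z\sim\mathrm{Bin}(n,\theta(x))$ and we set $N:=\widehat Z$ on $\{\widehat Z<u\}$ and $N:=$ an independent $\nu_x$-draw otherwise, then $N\sim\nu_x$ and $N=\widehat Z$ whenever $\widehat Z<u$, an event of probability $1-\delta_x$ where $\delta_x:=\Pr[\mathrm{Bin}(n,\theta(x))\ge u]$.

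Gluing the three ingredients $p_\vp(x,\cdot)\geq_{st}\nu_x$, the coupling $(N,\widehat Z)$ above, and $\mathrm{Bin}(n,\theta(x))\geq_{st}\mathrm{Bin}(w,\lgam)$ produces, for each $0\le w\le x<u$, a coupling $\pi_x^w$ of $p_\vp(x,\cdot)$ with $\mathrm{Bin}(w,\lgam)$ under which the first coordinate dominates the second except on an event of probability at most $\delta_x$. Running the joint chain $(X''_m,V_m)$ by drawing $(X''_{m+1},V_{m+1})$ from $\pi_{X''_m}^{V_m}$ with fresh randomness at each step (and letting $V$ continue as an independent $\mathrm{Ber}(\lgam)$ branching process once domination is first violated), $X''$ has the law of the chain conditioned on $T_0<T_u^+$ and $(V_m)$ is a subcritical $\mathrm{Ber}(\lgam)$ Galton--Watson process; as long as no violation has occurred one has $V_m\le X''_m<u$, so the inductive hypothesis $0\le w\le x<u$ is preserved and $\pi_{X''_m}^{V_m}$ is always defined. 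With $M:=e^{(1-\kappa)^2(\la^{-1}\log\la-\eps)^2n}$, a union bound gives $\Pr[\exists\,1\le m\le M:\ X''_m<V_m]\le M\cdot\sup_{0\le x<u}\delta_x$, so it suffices to prove $\sup_{x<u}\delta_x\le M^{-2}$.

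That bound comes from a Chernoff estimate for the upper tail of $\mathrm{Bin}(n,\theta(x))$. A short case analysis on whether $\log\la-\la\eps\le1$ or $>1$, using $b(x)\le e^{-1}$, the unimodality of $t\mapsto te^{-t}$, and $u/n<\la^{-1}\log\la$, shows $n\theta(x)\le\rho u$ for all $x<u$ and some $\rho=\rho(\la,\eps)<1$; hence $\delta_x\le\exp\!\big(-nD(u/n\,\Vert\,\theta(x))\big)$ where $D(a\Vert p)=a\log\frac ap+(1-a)\log\frac{1-a}{1-p}$, and $D(u/n\Vert\theta(x))\ge(u/n)\big(\log\tfrac1\rho-1\big)$. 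The step I expect to be the real obstacle is closing exactly this numerology: one must check $(u/n)(\log\tfrac1\rho-1)\ge2(1-\kappa)^2(u/n)^2$ uniformly over $\la$ and $\eps$, which works because $u/n$ is small precisely in the regime ($\la$ near $1$) where $\kappa$, and therefore $\rho$, is bounded, while when $u/n$ is of order one (forcing $\la$ near $e$) the constant $\kappa$, hence $\rho$, is tiny and $\log\tfrac1\rho$ is large. This, together with the fact that the truncation in $\nu_x$ genuinely destroys almost-sure stochastic domination — which is why the conclusion must be a finite-horizon, high-probability statement — is where the bookkeeping needs care; the rest is routine.
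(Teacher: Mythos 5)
Your construction follows the paper's skeleton almost exactly: lower-bound the tilted kernel $p_\vp(x,\cdot)$ by the truncated binomial $\nu_x$ via Lemmas~\ref{lem:uniform lower bound} and~\ref{lem:st3}, dominate the untruncated binomial from below by $\mathrm{Bin}(x,\lgam)$ by splitting the $n$ trials into $x$ blocks, treat the truncation as a per-step failure event (your resampling device for producing $N\sim\nu_x$ with $N=\widehat Z$ on $\{\widehat Z<u\}$ is correct), and finish with a union bound over $e^{(1-\kappa)^2(\la^{-1}\log\la-\eps)^2n}$ steps. The gluing of the three couplings and the inductive maintenance of $V_m\le X''_m<u$ are fine.

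The gap is exactly where you diverge from the paper: the per-step tail estimate $\delta_x=\Pr[\mathrm{Bin}(n,\theta(x))\ge u]\le e^{-2(1-\kappa)^2(u/n)^2n}$. First, your intermediate claim that $n\theta(x)\le\rho u$ for some $\rho=\rho(\la,\eps)<1$ does \emph{not} follow from the ingredients you list ($b(x)\le e^{-1}$, unimodality of $te^{-t}$, $u/n<\la^{-1}\log\la$): for $x$ near $u$ one has $nb(x)=\la x e^{-\la x/n}\approx ue^{\la\eps}>u$, so $n\theta(x)\le\tfrac{e}{e-1}\kappa\,nb(x)$ is below $u$ only if $\kappa$ is quantitatively small, e.g.\ $\kappa\le e^{-e\la/(e-1)}$. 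That bound holds for the particular $\kappa$ constructed in the \emph{proof} of Lemma~\ref{lem:uniform lower bound}, but it is not part of its statement, which is all your write-up invokes (and shrinking $\kappa$ by fiat changes the constant $(1-\kappa)^2$ in the statement you are proving). Second, even granting such a $\rho$, the bound you propose, $D(u/n\,\Vert\,\theta(x))\ge(u/n)(\log\rho^{-1}-1)$, must beat $2(1-\kappa)^2(u/n)^2$ uniformly in $\la>1$ and $0<\eps<\la^{-1}\log\la$; with $\rho=\tfrac{e}{e-1}\la\kappa$ and $\kappa=e^{-e\la/(e-1)}$ this reduces to $\tfrac{e\la}{e-1}-\log\tfrac{e\la}{e-1}-1\ge 2(1-\kappa)^2\tfrac{\log\la}{\la}$, which in the intermediate regime ($\la$ between about $1.2$ and $1.6$, $\eps$ small) holds only with a razor-thin margin and fails if the $(1-\kappa)^2$ factor is dropped. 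So the step you yourself flag as ``the real obstacle'' is genuinely not closed by what you wrote; it is not routine bookkeeping.

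The paper avoids all of this with one extra comparison, Lemma~\ref{lem:st2}(b): since $\theta(x)\ge\kappa x/n$, the law of $\mathrm{Bin}(n,\theta(x))$ conditioned on $\{<u\}$ dominates that of $Z\sim\mathrm{Bin}(n,\kappa x/n)$ conditioned on $\{Z<u\}$, and $Z$ still dominates $\mathrm{Bin}(x,\lgam)$ by the same block argument. Because $\E Z=\kappa x\le\kappa u$, plain Hoeffding gives $\Pr[Z\ge u]\le\exp(-2(1-\kappa)^2(u/n)^2n)$ — exactly the per-step failure probability needed for the stated horizon and probability, with no case analysis on $\la$, $\eps$, or the size of $\kappa$. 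If you insert this reduction (or otherwise rigorously prove a tail bound for $\mathrm{Bin}(n,\theta(x))$ with that exact exponent), the rest of your argument goes through and coincides with the paper's proof.
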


\begin{pf}
By Lemma~\ref{lem:uniform lower bound} and Lemma~\ref{lem:st3}, for
any $0 < x < \mathrm{eq}- \varepsilon n$, the transition distribution
$p_\varphi(x, \cdot)$ stochastically dominates $\nu_x(y)\propto{n
\choose y}  (\frac{b(x)\kappa}{1 - b(x)} )^y\times\break  \mathbh{1}_{ \{y < \mathrm{eq}-\varepsilon n  \} }$. In other
words,
\[
Y | \{Y < \mathrm{eq}- \varepsilon n\} \le_{\mathrm{st}} p_\varphi(x,
\cdot),
\]
where $Y$ is distributed as $\operatorname{Bin}(n, \theta(x))$, where
$ \theta(x) = \frac{\kappa b(x)}{1 - b(x) (1- \kappa)}$. By
Lem\-ma~\ref{lem:st2}(b) and from the simple\vspace*{1pt} inequality $\theta(x) \ge
\frac{ \kappa x}{n}$, we further have $ Z | \{Z < \mathrm{eq}-
\varepsilon n\} \le_{\mathrm{st}} p_\varphi(x, \cdot)$, where $Z$ is
distributed as $\operatorname{Bin}(n, \frac{ \kappa x}{n})$.
Clearly, $\sum_{i=1}^x Z_i \le_{\mathrm{st}} Z$ where $Z_i$ are i.i.d.
$\operatorname{Bin}( \lfloor\frac{n}{x} \rfloor, \frac{ \kappa x}{n})$.
We can find $\underline{\gamma}<1$ such that for any $n \ge1$ and
any $0< x < \mathrm{eq}- \varepsilon n$,
\[
\mathbb{P}[Z_i = 0] = \biggl(1 - \frac{ \kappa x}{n} \biggr)
^{ \lfloor {n}/{x} \rfloor} \le1- \underline{\gamma}.
\]
Consequently, $Z_i$ stochastically dominates $\operatorname{Ber}( \underline
{\gamma})$ and hence
$ \operatorname{Bin}(x, \underline{\gamma}) \le_{\mathrm{st}} Z$.

On the other hand, by Hoeffding's inequality,
\[
\mathbb{P}[ Z \ge\mathrm{eq}- \varepsilon n] \le\exp \bigl(-2(1-
\kappa)^2 \bigl(\lambda^{-1} \log\lambda- \varepsilon
\bigr)^2n \bigr).
\]
Thus, on the event $\{ Z < \mathrm{eq}- \varepsilon n\}$, which
happens with probability at least $1- \exp( - 2 (1- \kappa)^2(\lambda
^{-1} \log\lambda- \varepsilon)^2 n )$, the distribution $p_\varphi
(x, \cdot)$ stochastically dominates $ \operatorname{Bin}(x, \underline
{\gamma})$. So, a simple union bound allows us to couple the
conditioned chain $X''$ with a subcritical Galton--Watson process
having offspring distribution $\operatorname{Ber}(\underline{\gamma})$ so
that with probability at least $1 - \exp( -(1- \kappa)^2 (\lambda
^{-1} \log\lambda- \varepsilon)^2 n )$, the subcritical
Galton--Watson process is dominated by $X''$ for the first $\exp( (1-
\kappa)^2 (\lambda^{-1} \log\lambda- \varepsilon)^2 n )$ steps.
This completes the proof.
\end{pf}

%co17 #&#
\begin{cor}[(Lower bound on transition window)]
\label{cor:lb}
Given $\lambda>1$ and $0<\varepsilon< \frac{\log\lambda}{\lambda
}$, there exists $C>0$
such that the following holds for all $n \ge1$:
\[
\mathbb{E}_x\bigl[T_0|T_0<T_{\mathrm{eq}- \varepsilon n}^+
\bigr] \geq C^{-1} \log(1+x), \qquad 0\leq x< \mathrm{eq}- \varepsilon n.
\]
\end{cor}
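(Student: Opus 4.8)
The plan is to read the lower bound off the coupling in Lemma~\ref{lem:coupling2}. Write $(X''_m)_{m\ge0}$ for the chain conditioned on $\{T_0<T^+_{\eq-\eps n}\}$, so that $\EX_x[T_0\mid T_0<T^+_{\eq-\eps n}]=\EX_x[T_0(X'')]$ (here $T_0(X'')$ is the hitting time of $0$ for the $h$-transformed chain, which has the law of $T_0$ under the conditioning). Lemma~\ref{lem:coupling2} gives, for each $0\le x_0<\eq-\eps n$, a coupling of $X''$ with a subcritical Galton--Watson process $(V_m)_{m\ge0}$ of offspring law $\mathrm{Ber}(\lgam)$, with $V_0=X''_0=x_0$, such that on an event $\G$ with $\Pr(\G)\ge 1-e^{-c_1 n}$, where $c_1=(1-\kappa)^2(\la^{-1}\log\la-\eps)^2>0$, one has $X''_m\ge V_m$ for every $0\le m\le H:=e^{c_1 n}$. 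Consequently, on $\G$, if $t\le H$ and $V_t\ge1$ then $X''_t\ge1$, i.e.\ $T_0(X'')>t$. So the task reduces to producing $t^\star\asymp\log(1+x_0)$ with $\Pr[T_0(V)>t^\star]$ bounded away from $0$ uniformly in $x_0$ and $n$, plus the bookkeeping that $t^\star\le H$ and $\Pr(\G^c)$ is small.

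I would first analyze the extinction time of $V$. Since the offspring law is Bernoulli, a lineage of $V$ never branches: it is alive at time $m+1$ with probability $\lgam$ given it is alive at time $m$, and dies otherwise, so each of the $x_0$ initial lineages has a geometric lifetime with $\Pr[\mathrm{lifetime}>t]=\lgam^t$, and $T_0(V)$ is the maximum of $x_0$ i.i.d.\ such lifetimes, giving $\Pr[T_0(V)>t]=1-(1-\lgam^t)^{x_0}$. Put $\delta:=\log(1/\lgam)>0$ and, for $x_0\ge1$, $t^\star:=\big\lfloor\tfrac{1}{2\delta}\log(1+x_0)\big\rfloor$, so that $\lgam^{t^\star}\ge(1+x_0)^{-1/2}$, and since $x_0/\sqrt{1+x_0}\ge 1/\sqrt2$ for $x_0\ge1$,
$$(1-\lgam^{t^\star})^{x_0}\le\exp\!\big(-x_0(1+x_0)^{-1/2}\big)\le e^{-1/\sqrt2}<\tfrac12 .$$
Thus $\Pr[T_0(V)>t^\star]>\tfrac12$, uniformly in $x_0\ge1$ and $n$, while also $t^\star+1\ge\tfrac{1}{2\delta}\log(1+x_0)$.

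Finally I would assemble the pieces. Choose $N_1=N_1(\la,\eps)$ large enough that for all $n\ge N_1$ both $e^{-c_1 n}<\tfrac14$ and $\tfrac{1}{2\delta}\log(1+n)\le H$ hold (the latter for all large $n$, since the left side is $O(\log n)$ while $H$ is exponential in $n$); note that $x_0<\eq-\eps n<n$ then forces $t^\star\le H$. For $n\ge N_1$ and any $1\le x_0<\eq-\eps n$,
$$\Pr[T_0(X'')>t^\star]\ \ge\ \Pr\big[\{T_0(V)>t^\star\}\cap\G\big]\ \ge\ \Pr[T_0(V)>t^\star]-\Pr(\G^c)\ >\ \tfrac12-\tfrac14=\tfrac14,$$
whence $\EX_x[T_0(X'')]\ge(t^\star+1)\cdot\tfrac14\ge\tfrac{1}{8\delta}\log(1+x_0)$. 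The remaining cases are trivial: for $x_0=0$ the assertion is vacuous, and for $n<N_1$ (so that $x_0<\eq-\eps n<N_1/e$ is bounded) and $x_0\ge1$ the bound $\EX_x[T_0(X'')]\ge1$ — valid since $x_0\ge1$ forces $T_0\ge1$ — already dominates $C^{-1}\log(1+x_0)$ once $C\ge\log(1+N_1/e)$. Taking $C:=\max\{8\delta,\ \log(1+N_1/e),\ 1\}$ proves the corollary.

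The only point needing care is making $C$ genuinely independent of $n$: for small $n$ the event $\G$ of Lemma~\ref{lem:coupling2} need not have probability close to $1$, so those finitely many values of $n$ must be handled by the trivial estimate $T_0\ge1$, which is legitimate precisely because $x_0$ is bounded there. Beyond this bookkeeping, the corollary is an immediate consequence of Lemma~\ref{lem:coupling2}, and I do not expect any essential obstacle.
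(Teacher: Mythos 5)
Your proposal is correct and follows essentially the same route as the paper: both rest entirely on the coupling of the conditioned chain with the subcritical $\mathrm{Ber}(\lgam)$ Galton--Watson process from Lemma~\ref{lem:coupling2}, and both extract a $\log(1+x)$ lower bound from that process's extinction time. The only (harmless) differences are in execution: you compute the Bernoulli GW survival probability explicitly as a maximum of geometric lifetimes and lower-bound the expectation via $\Pr[T_0>t^\star]\ge\tfrac14$ at a single time $t^\star\asymp\log(1+x)$, whereas the paper quotes the standard bound $\E_x[S]\ge c'\log(1+x)$ and truncates at $D\log n$; your explicit treatment of small $n$ via the trivial bound $T_0\ge1$ is a welcome extra bit of care.
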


\begin{pf}
By Lemma~\ref{lem:coupling2}, the conditioned chain $X''$ can be
coupled with the sub-critical Galton--Watson process $V$ with mean
offspring $\underline{\gamma}$ such that
$X''_0 = V_0 =x$ and $X''_t \ge V_t$ for all $1 \le t \le e^{cn}$ with
probability at least $1 - e^{-cn}$. Let $S$ be the time of extinction
for the process $V$. It follows from the standard theory of branching
process that $\mathbb{E}_x [S] \ge c' \log(1+x)$ for all $x \ge0$.
On the other hand,
\[
\mathbb{P}_x[S> t] \le x \mathbb{E}_1
[V_t ] \le x \underline{\gamma }^t.
\]
By choosing $t = D \log n$ with a sufficiently large constant $D =
D(\underline{\gamma})>0$, we obtain that $E_x [S \mathbh{1}_{
\{S \le D \log n  \} }] \ge\frac{c'}{2} \log(1+x)$ for all $0
\le x \le n$. By the above coupling,
\[
\mathbb{E}_x\bigl[T_0|T_0<T_{\mathrm{eq}- \varepsilon n}^+
\bigr] \ge\mathbb {E}_x [S \mathbh{1}_{ \{S \le D \log n  \} }] - D \log n
\cdot e^{-cn},
\]
which completes the proof.
\end{pf}

%s4 #&#
\section{Upper bound on extinction window}

Throughout this section we set
$\varepsilon>0$ and $h(x) = \mathbb{P}_x [ T_0 < T_{\mathrm{eq}-\varepsilon n}^+ ]$.

The next lemma bootstraps the result from Corollary~\ref{cor:extinction from small}.

%le18 #&#
\begin{lem}
\label{lem:starting from small}
Given $\lambda>1$ and $\varepsilon>0$, there exist $C>0$ and $ \eta>0$
such that for all $0 \leq x < \eta n$,
\[
\mathbb{E}_x \bigl[ T_0 |  T_0 <
T_{\mathrm{eq}-\varepsilon n}^+ \bigr] \leq C \log(1+x).
\]
\end{lem}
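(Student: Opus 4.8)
The plan is to reduce the statement for all $0 \le x < \eta n$ to the already-established Corollary~\ref{cor:extinction from small}, which controls $\E_x[T_0 \mid T_0 < T_{\eps n}^+]$. The point is that on the event $\{T_0 < T_{\eq - \eps n}^+\}$ we may run the chain until it first falls below $\eps' n$ (for a suitably small $\eps' \le \eps$ satisfying the hypotheses of Corollary~\ref{cor:extinction from small}), and from that moment onward we want to compare the law of the chain conditioned on $\{T_0 < T_{\eq-\eps n}^+\}$ with the chain conditioned on $\{T_0 < T_{\eps' n}^+\}$. So first I would fix $\eps'$ small enough to invoke Corollary~\ref{cor:extinction from small}, and set $\eta = \eps'/2$ (say), so that starting points $x < \eta n$ are comfortably below $\eps' n$.

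Next I would split $T_0$ according to the first time $\sigma$ the chain returns to or below level $\eps' n$ after possibly making an excursion upward; but since $x < \eta n < \eps' n$ already, the cleaner route is: either the chain goes extinct without ever reaching $\eps' n$, or it first hits $\eps' n$ and then must come back down. Use the strong Markov property at $T_{\eps' n}^+$. The contribution of the first scenario is bounded by $\E_x[T_0 \mid T_0 < T_{\eps' n}^+] \le C\log(1+x)$ directly from Corollary~\ref{cor:extinction from small}. For the second scenario, after hitting level $\eps' n$ the chain is at some state $y \approx \eps' n$, and on $\{T_0 < T_{\eq-\eps n}^+\}$ it must eventually return below $\eps' n$; the key observation is that $\E_y[T_0 \mid T_0 < T_{\eq - \eps n}^+]$ for $y$ in the range $[\eps' n, \eq - \eps n)$ is bounded by a constant independent of $n$ — this is exactly the kind of bound that the bulk of the section (the bootstrapping from $\eps' n$ up to $\eq - \eps n$ via Lemmas on hitting probabilities and the geometric drift of the tilted chain, cf. Lemma~\ref{lem:geometric upper bound} and the coupling lemmas) is designed to give, and Lemma~\ref{lem:starting from small} will in turn feed into that argument. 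To avoid circularity I would instead argue directly: from Lemma~\ref{lem:st1} and the geometric decay $h(x+1) \le \beta h(x)$ (which should be available analogously to Lemma~\ref{lem:beta}, or can be derived on the relevant range), the tilted chain $p_h(x,\cdot)$ is dominated by a binomial with small mean, hence drifts down multiplicatively, so the conditioned chain reaches $[0,\eps' n)$ in $O(\log n)$ expected steps from any start below $\eq - \eps n$.

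Concretely, the steps in order: (1) choose $\eps' = \eps'(\lambda) \le \eps$ meeting the hypotheses of Corollary~\ref{cor:extinction from small}, and $\eta = \eps'/2$; (2) decompose, for $x < \eta n$,
$$\E_x[T_0 \mid T_0 < T_{\eq-\eps n}^+] \le \E_x[T_0 \1{T_0 < T_{\eps' n}^+} \mid T_0 < T_{\eq-\eps n}^+] / \Pr_x[T_0 < T_{\eq-\eps n}^+] + (\text{excursion term}),$$
and observe that on $\{T_0 < T_{\eps' n}^+\}$ the two conditionings agree, so this first piece is at most $\E_x[T_0 \mid T_0 < T_{\eps' n}^+] \le C\log(1+x)$; (3) for the excursion term, condition on $X_{T_{\eps' n}^+} = y$ and use the strong Markov property plus a uniform bound $\sup_{\eps' n \le y < \eq - \eps n} \E_y[T_0 \mid T_0 < T_{\eq-\eps n}^+] \le C'$, obtained from the multiplicative downward drift of the tilted chain as above; (4) combine, noting that the number of upcrossings of level $\eps' n$ before extinction is itself stochastically bounded by a geometric variable (again by the uniform lower bound on extinction probability, Lemma~\ref{lem:uniform lower bound}, and Lemma~\ref{lem:beta}-type estimates), so the total excursion contribution is $O(1)$.

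The main obstacle I expect is step (3)–(4): establishing the $n$-independent bound on the expected conditioned extinction time starting from near level $\eps' n$, without presupposing the very result we are proving. The clean way around this is to work entirely with the tilted chain $p_h$ on the range $[0, \eq - \eps n)$: show that $h(x+1) \le \beta h(x)$ with $\lambda \beta < 1$ on this range (arguing as in Lemma~\ref{lem:beta}, using Lemma~\ref{lem:geometric upper bound} for the upper tail and Lemma~\ref{lem:uniform lower bound} for the lower bound on $h$), then apply Lemma~\ref{lem:st1} to get $p_h(x,\cdot) \le_{st} \mathrm{Bin}(n, \theta(x))$ with $\theta(x) \le \bar\gamma x / n$ for a constant $\bar\gamma < 1$, hence $\E[X_1' \mid X_0' = x] \le \bar\gamma x$; a standard supermartingale/potential-function argument ($\log(1+X_t')$ as an approximate supermartingale away from $0$, plus the uniform positive probability of hitting $0$ in a bounded number of steps from any small state) then yields $\E_x[T_0 \mid T_0 < T_{\eq-\eps n}^+] = O(\log(1+x))$ uniformly, which is in fact already the full statement for all $x < \eq - \eps n$ and not just $x < \eta n$ — so the restriction to $x < \eta n$ in the lemma is only what is needed downstream. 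I would present the argument at the level of this multiplicative-drift reduction and cite the coupling lemmas for the stochastic domination inputs.
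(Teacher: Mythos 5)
Your overall decomposition (handle $T_0\1{T_0<T_{\eps' n}^+}$ via Corollary~\ref{cor:extinction from small}, then control the contribution of paths that climb above level $\eps' n$ before dying) is the same skeleton as the paper's proof, but the way you propose to close steps (3)--(4) contains a genuine gap. You want a uniform bound $\sup_{\eps' n\le y<\eq-\eps n}\E_y[T_0\mid T_0<T_{\eq-\eps n}^+]\le C'$ obtained from a geometric decay $h(y+1)\le\beta h(y)$ with $\la\beta<1$ \emph{on the whole range} $[0,\eq-\eps n)$, and hence a multiplicative downward drift of the tilted chain there. Both claims fail. First, the constant bound cannot hold: by Corollary~\ref{cor:lb} the conditioned extinction time from $y\asymp\eps' n$ is at least $c\log n$, so the best one could hope for is $C'\log n$, which by itself does not give $C\log(1+x)$ for small $x$ unless the conditional probability of ever reaching $\eps' n$ is made quantitatively tiny --- and your geometric-number-of-upcrossings remark does not supply that. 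Second, and more fundamentally, the decay $h(y+1)\le\beta h(y)$ and the downward drift of $p_h$ are simply false for large $\la$ on the range up to $\eq-\eps n$: as the paper's Figure~\ref{fig:1} shows, for $\la=6$ the function $h$ is not even monotone (it increases near $\eq-\eps n$), and Figure~\ref{fig:2} shows states from which the tilted chain moves \emph{up} with high probability. The proof of Lemma~\ref{lem:beta} relies on the Galton--Watson comparison of Lemma~\ref{lem:sub and super}, which requires the barrier $\eps n$ with $\eps$ small (e.g.\ $\la e^{-\la\eps}>1$), so "arguing as in Lemma~\ref{lem:beta}" does not extend to the range $[0,\eq-\eps n)$; your parenthetical claim that the argument would in fact prove the bound for all $x<\eq-\eps n$, making Lemma~\ref{lem:petunia} unnecessary, is an artifact of this false premise.

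The paper closes the excursion term differently, and this is the idea you are missing: it never controls the conditioned chain above $\eta' n$ at all. Using $\kappa^x\le h(x)\le\theta^x$ (Lemmas~\ref{lem:uniform lower bound} and~\ref{lem:geometric upper bound}), one chooses $r$ with $\theta^{r/2}<\kappa/2$ and sets $\eta=\eta'/r$, so that for $x\le\eta n$ the \emph{conditional} probability of reaching $\eta' n$ before extinction is at most $\theta^{\eta' n}/h(x)\le h(x)\,2^{-2\eta n}/h(x)$, i.e.\ exponentially small; this is exactly where the restriction $x<\eta n$ (with $\eta$ strictly smaller than $\eta'$) enters. Separately, a large-deviation argument shows $T_0\wedge T_{\eq-\eps n}^+$ has second moment polynomial in $n$, and Cauchy--Schwarz then makes the excursion term $o(1)$, yielding $C'\log(1+x)$. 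If you want to salvage your route, you would need to replace your step (3) by such an exponentially small conditional upcrossing probability (your constant-expected-time claim cannot be repaired), which effectively reproduces the paper's argument.
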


\begin{pf}
%Choose $\eps_0$ as in Corollary~\ref{cor:extinction from small}. Fix $
%\eps< \eps_0$.
By Lemmas \ref{lem:uniform lower bound}
and \ref{lem:geometric upper bound},
there exist $\kappa<1$ and $\theta < 1$ such that for all $0 \le x <
\mathrm{eq}- \varepsilon n$,
$\kappa^x \leq h(x) \leq\theta^x$.
We choose $r>1$ so that $\theta^{r/2} < \frac{\kappa}{2}$.

By Corollary~\ref{cor:extinction from small}, there exist
$C>0$, $\eta' > 0$ small enough such that for all $x < \eta' n$,
\[
\mathbb{E}_x \bigl[ T_0  |  T_0 <
T_{\eta' n}^+ \bigr] \leq C \log(1+x).
\]

Then, taking $\eta= \eta' / r$,
by the strong Markov property, for any $0 \leq x < \eta n$,
\begin{eqnarray*}
\mathbb{P}_x \bigl[ T_{\eta' n}^+ < T_0 <
T_{\mathrm{eq}-\varepsilon n
}^+ \bigr] & \leq  & \sup_{x \geq\eta' n} h(x) \leq
\theta^{r \eta n} < \biggl(\frac
{\kappa}{2} \biggr)^{2 \eta n}
\\
& \leq & \inf_{x \leq\eta n} h(x)^2 \cdot2^{-2 \eta n}.
\end{eqnarray*}

Note that since for $x \leq\mathrm{eq}-\varepsilon n$,
\[
n b(x) = x \cdot\lambda e^{-\lambda x/n} \geq e^{\lambda\varepsilon} \cdot x,
\]
we have that if $X_k < u - \varepsilon n$, then $\mathbb{E}[ X_{k+1} |
X_k ] \geq e^{\lambda\varepsilon} \cdot X_k$.
Let $A = e^{ \lambda\varepsilon/ 2} > 1$,
and let $p = \exp (- \frac{(A-1)^2}{4}  ) < 1$.
By standard large deviations of binomial random variables,
\begin{eqnarray*}
\mathbb{P}_x [ X_1 \leq A x ] & \leq &
\mathbb{P}_x \bigl[ X_1 \leq A^{-1}
\mathbb{E}_x[X_1] \bigr] \leq\exp \biggl(-
\frac{(1 - A^{-1} )^2}{4} \cdot\mathbb{E}_x[ X_1] \biggr)
\\
& \leq & \exp \biggl(- \frac{(A-1)^2}{4} \cdot x \biggr) = p^x.
\end{eqnarray*}
Let $m>1$ be an integer such that $A^m \geq e$.
Then
\[
\mathbb{P}_x \bigl[ T_{ex}^+ \leq m \bigr]  \geq
\mathbb{P}_x \bigl[ \forall  1 \leq j \leq m \wedge
T_{ex}^+,  X_{j} \geq A X_{j-1} \bigr] \geq
\bigl(1 - p^x \bigr)^{m}.
\]
This holds for any $x$. Thus inductively, for $k = \lceil\log n \rceil$,
\begin{eqnarray*}
\mathbb{P}_x \bigl[ T_{\mathrm{eq}-\varepsilon n}^+ \leq k m \bigr] &\geq &
\mathbb{P}_x \bigl[ T_{ex}^+ \leq m \bigr] \cdot\inf
_{y \geq ex} \mathbb{P}_y \bigl[ T_{\mathrm{eq}-\varepsilon n}^+
\leq(k -1) m \bigr]
\\
&\ge & \cdots\geq(1-p)^{m \log n}.
\end{eqnarray*}
Again this holds for all $x>0$.
Thus $\frac{1}{km} \cdot(T_0 \wedge T_{\mathrm{eq}-\varepsilon n}^+)$
is dominated by a geometric random variable
of mean $(1-p)^{-m \log n}$.
So we conclude that
\[
\mathbb{E}_x \bigl[ \bigl(T_0 \wedge
T_{\mathrm{eq}-\varepsilon n}^+\bigr)^2 \bigr] \leq (km)^2 \cdot2
(1-p)^{-2 \log n},
\]
which is polynomial in $n$ as $n \to\infty$.

Let $\mathcal{E}(u)$ denote the event $ \{T_0 < T_u^+  \}$.
Putting everything together, we obtain that for $0<x <\eta n$,
\begin{eqnarray*}
&& \mathbb{E}_x \bigl[ T_0  |  \mathcal{E}(
\mathrm{eq}-\varepsilon n) \bigr]
\\
&&\qquad  = \mathbb{E}_x \bigl[ T_0 \mathbh{1}_{ \mathcal{E}(\eta' n) }
|  \mathcal{E}(\mathrm{eq}-\varepsilon n) \bigr] \\
&&\qquad\quad{}+ \mathbb{E}_x
\bigl[ T_0 \wedge T_{\mathrm{eq}-\varepsilon n}^+ \cdot \mathbh{1}_{ \{T_{\eta' n}^+ <T_0 < T_{\mathrm{eq}-\varepsilon
n}^+  \} }
 |  \mathcal{E}(\mathrm{eq}-\varepsilon n) \bigr]
\\
&&\qquad \leq\mathbb{E}_x \bigl[ T_0  |  \mathcal{E}\bigl(
\eta'n\bigr) \bigr] \cdot\frac
{\mathbb{P}_x [ \mathcal{E}(\eta' n) ] }{\mathbb{P}_x [ \mathcal
{E}(\mathrm{eq}-\varepsilon n) ] }\\
&&\qquad\quad{}+ \frac{ \sqrt{ \mathbb{E}_x [ (T_0 \wedge T_{\mathrm
{eq}-\varepsilon n}^+ )^2 ]
\cdot\mathbb{P}_x [ T_{\eta' n} < T_0 < T_{\mathrm{eq}-\varepsilon
n}^+ ] }}{ h(x) }
\\
&&\qquad  \leq C \log (1+x) + km\sqrt{2} (1-p)^{-\log n} \cdot2^{-\eta n} \leq
C' \log(1+x),
\end{eqnarray*}
for some constant $C'>0$.
\end{pf}

%le19 #&#
\begin{lem}
\label{lem:petunia}
Given $\lambda>1$ and $\varepsilon,\delta>0$, there exists a
constant $C=C(\varepsilon,\lambda,\break  \delta) > 0$
such that for all $0\leq x<\mathrm{eq}-\varepsilon n$,
\[
\mathbb{E}_x\bigl[H  |  T_0<T_{\mathrm{eq}-\varepsilon n}^+
\bigr]\leq C,
\]
where $H:=\sum_{k=0}^{\infty}\mathbh{1}_{ \{\delta
n<X_k< \mathrm{eq}-\varepsilon n  \} }$ is the total time spent
by $X$
in the interval $(\delta n, \mathrm{eq}-\varepsilon n)$.
\end{lem}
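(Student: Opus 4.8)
The plan is to show that whenever the conditioned chain sits somewhere in the ``middle'' region $(\delta n,\eq-\eps n)$, it has a probability bounded below (uniformly in $n$ and in the current position) of exiting that region within a bounded number of steps, where ``exiting'' means dropping below $\delta n$ — and that once it drops below, the expected number of returns to the middle region is $O(1)$. Summing a geometric series over these excursions then gives the claimed constant bound on $\E_x[H\mid T_0<T_{\eq-\eps n}^+]$.

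\textbf{Step 1: The tilted chain pushes down from the middle.} Fix the harmonic function $\vp(x)=h(x)=\Pr_x[T_0<T_{\eq-\eps n}^+]$ and work with the tilted transition kernel $p_\vp(x,y)=\vp(y)p(x,y)/\vp(x)$. By Lemmas~\ref{lem:uniform lower bound} and~\ref{lem:geometric upper bound} we have $\kappa^x\le h(x)\le\theta^x$ for all $0\le x<\eq-\eps n$, with constants $\kappa,\theta\in(0,1)$ depending only on $\lambda,\eps$. For $x\in(\delta n,\eq-\eps n)$ I would estimate, for a suitable fixed ratio $\rho<1$,
$$
p_\vp(x,\{y:y>\rho x\})=\sum_{y>\rho x}\frac{h(y)}{h(x)}p(x,y)\le \frac{1}{h(x)}\sum_{y>\rho x}\theta^y\,p(x,y)\le \frac{\theta^{\rho x}}{h(x)(1-\theta)}\le\frac{\theta^{\rho x}}{\kappa^x(1-\theta)},
$$
which is exponentially small in $n$ once $\theta^{\rho}<\kappa$, i.e.\ once $\rho$ is close enough to $1$; here I use $x>\delta n$. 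Hence under $p_\vp$, from any $x$ in the middle region, with probability $1-o(1)$ the chain moves to a point $\le\rho x$ in one step. Iterating, after at most $k_0=O(\log(1/\delta)/\log(1/\rho))$ steps — a constant independent of $n$ — the chain has left the middle region (either downward below $\delta n$, or it has hit $0$, or, with exponentially small probability, it escaped above $\eq-\eps n$, which is impossible under the conditioning). So each visit to the middle region is followed, with probability $\ge 1-o(1)$, by an exit below $\delta n$ within $k_0$ steps.

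\textbf{Step 2: Bounded expected number of returns to the middle region.} Once $X_k\le\delta n$, I want to bound the probability that the conditioned chain ever returns to the middle region $(\delta n,\eq-\eps n)$ before extinction. Here I would invoke Lemma~\ref{lem:starting from small} (which applies for $\delta<\eta$; if $\delta\ge\eta$ shrink the threshold and note $H$ only decreases) together with the crude observation that $\E_x[T_0\mid T_0<T_{\eq-\eps n}^+]\le C\log(1+x)\le C\log n$ for $x<\eta n$, so in particular the conditioned chain started below $\delta n$ reaches $0$ in at most $\mathrm{poly}(n)$ expected time, while to return to the middle region it must perform an atypical upward run. Concretely, from $x\le\delta n$ the probability (under the conditioned law) of reaching $\delta n$ before $0$ is at most $h(\delta n)^{-1}\Pr_x[T_{\delta n}^+<T_0]\le\kappa^{-\delta n}\theta^{?}$-type bounds are too lossy; instead I would argue directly that under $p_\vp$ the chain below $\delta n$ is dominated by the subcritical Galton--Watson process of Lemma~\ref{lem:coupling1} (with mean offspring $\ugam<1$), and a subcritical branching process started from a single particle reaches height $\delta n$ with probability exponentially small in $n$. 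A union bound over the (polynomially many, in expectation) steps before extinction shows the return probability $q_{\mathrm{ret}}$ is $o(1)$, in particular $q_{\mathrm{ret}}\le\frac12$ for large $n$; for small $n$ the statement is trivial since $H\le T_0$ is a.s.\ finite and we may absorb finitely many cases into the constant $C$.

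\textbf{Step 3: Assemble the geometric series.} Decompose $H=\sum_{j\ge1}H_j$ where $H_j$ is the time spent in the middle region during the $j$-th visit-block. By Step~1 each $H_j\le k_0$ deterministically on the event that the block ends by exiting downward, and on the complementary (exponentially unlikely) event $H_j\le T_0\wedge T^+_{\eq-\eps n}$, whose second moment is $\mathrm{poly}(n)$ (as computed inside the proof of Lemma~\ref{lem:starting from small}) — so that bad contribution is $\mathrm{poly}(n)\cdot e^{-cn}=o(1)$. The number of visit-blocks is stochastically dominated by a geometric variable with success probability $1-q_{\mathrm{ret}}\ge\frac12$ by Step~2. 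Therefore
$$
\E_x[H\mid T_0<T_{\eq-\eps n}^+]\le k_0\cdot\sum_{j\ge1}q_{\mathrm{ret}}^{\,j-1}+o(1)=\frac{k_0}{1-q_{\mathrm{ret}}}+o(1)\le 2k_0+1=:C,
$$
uniformly in $x<\eq-\eps n$ and in $n$.

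\textbf{Main obstacle.} The delicate point is Step~2: controlling the probability that the conditioned chain, once it has dipped below $\delta n$, climbs back up to the middle region before going extinct. The conditioning on $\{T_0<T_{\eq-\eps n}^+\}$ biases trajectories downward, but one must quantify this without circularity, and the bounds $\kappa^x\le h(x)\le\theta^x$ are individually too weak near $x\asymp n$; the resolution is to use the stochastic-domination-by-subcritical-Galton--Watson packaging of Lemma~\ref{lem:coupling1} to make the "no large upward excursion" statement rigorous, combined with the polynomial-in-$n$ bound on the extinction time of the conditioned chain. Steps~1 and~3 are routine given the estimates already established in the excerpt.
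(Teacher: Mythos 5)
Your Step 1 collapses at the central estimate, and the claim it is meant to support is in fact false in part of the parameter range. Since $\kappa^x\le h(x)\le\theta^x$ for all $x<\eq-\eps n$, necessarily $\kappa\le\theta$, so for any $\rho<1$ we have $\theta^{\rho}\ge\theta\ge\kappa$; the condition ``$\theta^{\rho}<\kappa$ once $\rho$ is close enough to $1$'' can never be met, and your bound $\theta^{\rho x}/\bigl(\kappa^x(1-\theta)\bigr)\ge(1-\theta)^{-1}$ is never small. This is not just a lossy constant: the qualitative statement that from every $x\in(\delta n,\eq-\eps n)$ the tilted chain drops by a fixed multiplicative factor with probability $1-o(1)$ is wrong for large $\la$ --- as recorded in the remark after Theorem~\ref{thm:main thm} and in Figures~\ref{fig:1}--\ref{fig:2}, for $\la=6$ the function $h$ is not monotone and there are states $x<\eq-\eps n$ from which the conditioned chain moves \emph{up} with high probability. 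Hence the deterministic excursion-length bound $H_j\le k_0$ in your Step 3 has no basis. Step 2 is also unsupported: Lemma~\ref{lem:coupling1} dominates by a subcritical Galton--Watson process only for the chain conditioned on $T_0<T_{\ep n}^+$, because it rests on Lemma~\ref{lem:beta}, i.e.\ the ratio bound $g(x+1)\le\beta g(x)$ for $g(x)=\Pr_x[T_0<T_{\ep n}^+]$ with $\ep$ small; no analogous ratio bound holds for $h$ at level $\eq-\eps n$ (again, $h$ need not be monotone), so you cannot invoke that coupling for the chain conditioned on $T_0<T^+_{\eq-\eps n}$, and the ``return probability $o(1)$'' claim is left without proof --- exactly the circularity you flag as the main obstacle.

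The paper avoids the tilted kernel entirely and argues on unconditioned path probabilities. In $(\delta n,\eq-\eps n)$ one has $nb(x)\ge e^{\la\ep}x$, so a step failing to multiply the population by $e^{\la\ep/2}$ (an ``unusual'' step) has probability at most $\exp\bigl(-(e^{\la\ep/2}-1)^2\delta n/4\bigr)$; choosing $m$ with $\delta e^{\la\ep m/2}>\la^{-1}\log\la-\eps$, any $m$ consecutive non-unusual steps starting in the region push the chain above $\eq-\eps n$. Therefore on $\{T_0<T^+_{\eq-\eps n}\}$ the event $\{H\ge d\}$ forces at least $\lfloor d/m\rfloor$ unusual steps, so $\Pr_x[T_0<T^+_{\eq-\eps n},\,H\ge d]\le e^{-c\,\lfloor d/m\rfloor\,\delta n}$, while $h(x)\ge\kappa^n$ by Lemma~\ref{lem:uniform lower bound}; dividing gives $\Pr_x[H\ge d\mid T_0<T^+_{\eq-\eps n}]\le e^{(K_1-dK_2)n}$, which sums over $d$ to a constant. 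The moral is that the lemma holds not because the conditioned chain drifts downward pointwise (it does not, for large $\la$), but because lingering in the middle region costs a large deviation \emph{per $m$ steps}, which beats the singly exponential cost $\kappa^n$ of the conditioning event; any repair of your excursion scheme would need an input of this type rather than Step 1 as written.
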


\begin{pf}
We start with the observation that for $0<x<\mathrm{eq}-\varepsilon n$,
since $\lambda e^{-\lambda\mathrm{eq}/ n} = 1$,
\[
\frac{nb(x)}{x}=\lambda e^{-{\lambda x}/{n}} \geq\lambda e^{-({\lambda(\mathrm{eq}-\varepsilon n)})/{n}}=
e^{\lambda\varepsilon}> 1. %e^{\frac{\la\ep}{2}}.
\]
Choose $m = m(\varepsilon, \lambda,\delta) \geq1$ large enough such that
$\delta e^{({\lambda\varepsilon}/{2})m}>\frac{\mathrm{eq}-
\varepsilon n}{n}$.
Call a step~$k$ of the Markov chain $X$ \textit{unusual}
if $\delta n<X_k<\mathrm{eq}-\varepsilon n$ and $X_{k+1}<e^{{\lambda\varepsilon}/{2} } X_k$.
By standard large deviations of binomial random variables, for $0<\xi<1$,
$\mathbb{P}[ \operatorname{Bin}(n,b) < \xi nb ] \leq\exp (- nb \cdot
\frac{(1-\xi)^2}{4}  )$.
%Azzuma's inequality (or Bernstein's inequality),
Since $n b(X_k) \geq e^{\lambda\varepsilon} X_k$,
the probability of step $k$ being unusual is bounded by
\[
\mathbb{P}\bigl[ \operatorname{Bin}\bigl(n,b(X_k) \bigr) <
e^{-\lambda\varepsilon/2} n b(X_k)  |  \delta n < X_k <
\mathrm{eq}- \varepsilon n \bigr] \leq\exp \biggl(- \bigl(e^{\lambda\varepsilon/2}-1
\bigr)^2 \cdot\frac
{\delta n }{ 4} \biggr).
\]

Note that if $\delta n < X_j < u - \varepsilon n$ and for all $k=j,
j+1, \ldots, j+m-1$
the step~$k$ is \textit{not} unusual, then by our choice of $m$,
\[
X_{j+m} \geq e^{({\lambda\varepsilon}/{2}) m } X_j > \frac
{\mathrm{eq}-\varepsilon n }{\delta n}
\cdot\delta n = \mathrm{eq}- \varepsilon n.
\]
That is, if all steps $j,j+1,\ldots,j+m-1$ are not unusual, then $T_0
> T_{\mathrm{eq}-\varepsilon n}^+$.
Thus $T_0 < T_{\mathrm{eq}-\varepsilon n}^+$ implies\vspace*{1pt} that every time
$j$ that $\delta n < X_j < \mathrm{eq}-\varepsilon n$,
we must have that there exists $j \leq k \leq j+m-1$ such that $k$ is
an unusual step.
In conclusion, for any $0<x<\mathrm{eq}-\varepsilon n$,
\begin{eqnarray*}
\mathbb{P}_x\bigl[T_0<T_{\mathrm{eq}-\varepsilon n}^+, H> d\bigr]
& \leq & \mathbb{P}_x\bigl[\mbox{$X$ takes at least $\lfloor d / m
\rfloor$ unusual steps}\bigr]
\\
& \leq & \exp \biggl(- \bigl(e^{\lambda\varepsilon/2}-1\bigr)^2 \cdot
\frac
{\delta}{4} \cdot\biggl\lfloor\frac{d}{m} \biggr\rfloor\cdot n
\biggr).
\end{eqnarray*}
On the other hand, by Lemma~\ref{lem:uniform lower bound}, $h(x) \ge
\kappa^n$. Combining the above two observations, we obtain
that for any $0<x<\mathrm{eq}-\varepsilon n$,
\[
\mathbb{P}_x\bigl[ H \ge d  |  T_0<T_{\mathrm{eq}-\varepsilon n}^+
\bigr] \le e^{( K_1 - dK_2)n}
\]
for constants $K_1$ and $K_2$ which are functions of $\delta,m,
\varepsilon$ and $\lambda$.
Now the assertion of the lemma follows immediately from the representation
\[
\mathbb{E}_x\bigl[ H  |  T_0<T_{\mathrm{eq}-\varepsilon n}^+
\bigr] = \sum_{d=0}^\infty
\mathbb{P}_x\bigl[ H \ge d  |   T_0<T_{\mathrm{eq}-\varepsilon n}^+
\bigr].
\]
%
%Thus, by choosing $d$ large enough (as a function of $\delta,m, \eps,
%\lambda$),
%we can make the probability of the event
%$\{T_0<T_{u-\ep n}^+,\ H>d\}$ smaller than $(c/2)^n \leq2^{-n} h(x)$
%where $c$ is the constant from Lemma~\ref{lem:uniform lower bound}.
%Hence, for this $d$,
%$$
%\PR_x[T_0<T_{u}^+,\ H\leq d] = h(x)-\PR_x[T_0<T_{u}^+,\ H>d ] \geq(1-
%2^{-n}) \cdot h(x).
%$$
\upqed\end{pf}

\begin{pf*}{Proof of Theorem~\ref{thm:main thm}}
The lower bound is established in Corollary~\ref{cor:lb}.

Let us now prove the upper bound.
Let $\eta>0$ be as in Lemma~\ref{lem:starting from small}. For $x <
\eta n$, Theorem~\ref{thm:main thm} follows directly from Lemma~\ref
{lem:starting from small}.
For $x \ge\eta n$, by the strong Markov property,
%
%e6 #&#
\begin{equation}
\label{eq:smp_thm3}
\hspace*{6pt}\mathbb{E}_x \bigl[ T_0   |
T_0 < T_{\mathrm{eq}-\varepsilon n}^+ \bigr] \leq \mathbb{E}_x
\bigl[ T_{\eta n}^-  |  T_0 < T_{\mathrm{eq}-\varepsilon n} \bigr] +
\sup_{y < \eta n} \mathbb{E}_y \bigl[ T_0 |  T_0 < T_{\mathrm
{eq}-\varepsilon n}^+ \bigr].
\end{equation}
We can bound $\mathbb{E}_x [ T_{\eta n}^-  |  T_0 < T_{\mathrm
{eq}-\varepsilon n} ] \le1+ \mathbb{E}_x [ H  |  T_0 < T_{\mathrm
{eq}-\varepsilon n} ]$ where $H =\sum_{k=0}^{\infty
}\mathbh{1}_{ \{\eta n < X_k < \mathrm{eq}-\varepsilon n
\} }$,
and hence by Lemma~\ref{lem:petunia}, $\mathbb{E}_x [ T_{\eta n}^-
|  T_0 < T_{\mathrm{eq}-\varepsilon n} ] \le C_1$.

Therefore, from \eqref{eq:smp_thm3} and by Lemma~\ref{lem:starting
from small},
\[
\mathbb{E}_x \bigl[ T_0  |  T_0 <
T_{\mathrm{eq}-\varepsilon n}^+ \bigr] \leq C_1+ C \log( \eta n) \le
C' \log(1+x),
\]
which completes the proof of Theorem~\ref{thm:main thm}.
\end{pf*}
%and
%Proposition~\ref{prop:starting from large} gives a slightly more
%general
%result than required. We get that $\E_x [ T_0 \ | \ T_0 < T_{u -\eps
%n}^+ ] = O(\log n)$
%for any $u \leq(\frac{1}{\lambda} \wedge\frac{\log\lambda}{
%\lambda} ) n$.
%Specifically we may choose $u = \frac{\log\lambda}{\lambda} n$ when $
%\lambda\leq e$.

\section*{Acknowledgments}

This work was initiated when the second and third authors were in the
Statistical Laboratory, University of Cambridge. Thanks goes to
Nathana\"el Berestycki
for making this possible.  The authors also thank the anonymous referee
for helpful comments and suggestions.

% imsref loaded by daiva.urboniene, 2014-11-24 17:56:06

%\begin{appendix}
%\section{}
%\end{appendix}

% zodis "Acknowledgments" paliekamas pagal autoriu
%\section*{Acknowledgments}

%\begin{supplement}[id=suppA]
%\sname{Supplement A}
%\stitle{}
%\slink[doi]{10.1214/00-AAPXXXXSUPP} %[doi,text={...}] - jei reikia
%suskaldyti doi
%\sdatatype{.pdf}
%\sfilename{aapXXXX\_supp.pdf}
%\sdescription{}
%\end{supplement}

%\begin{thebibliography}{99}
%\bibitem[\protect\citeauthoryear{}{}]{r1}
%\bibitem{r1}
%\end{thebibliography}

\printaddresses
\end{document}